\title{Cofinite Induced Subgraphs of Impartial Combinatorial Games: An Analysis of CIS-Nim}
\author{Scott M. Garrabrant\footnote{Pitzer College, Claremont, CA 91711;  Current address:  Department of Mathematics, UCLA, Los Angeles, CA 90095}, Eric J. Friedman\footnote{International Computer Science Institute, Berkeley CA 94720, USA}, and Adam Scott Landsberg\footnote{W.M. Keck Science Department, Claremont McKenna, Pitzer, and Scripps Colleges, Claremont, CA 91711}}
\begin{document}
\maketitle
\begin{abstract}
Given an impartial combinatorial game $G$, we create a class of related games (CIS-$G$) by specifying a finite set of positions in $G$ and forbidding players from moving to those positions  (leaving all other game rules unchanged).  Such  modifications amount to taking {\em cofinite induced subgraphs} (CIS) of the original game graph. Some recent numerical/heuristic work has suggested that the underlying structure and behavior of such ``CIS-games'' can shed new light on, and bears interesting relationships with, the original games from which they are derived.   In this paper we present an analytical treatment of the cofinite induced subgraphs associated with the game of (three-heap) Nim. This constitutes one of the simplest nontrivial cases of a CIS game.  Our main finding is that although the structure of the winning strategies in games of CIS-Nim can differ greatly from that of Nim, CIS-Nim games inherit a type of period-two scale invariance from the original game of Nim.
\end{abstract}

\newtheorem{thm}{Theorem}[section]
\newtheorem{thm2}{Theorem}[section]
\newtheorem{cor}[thm]{Corollary}
\newtheorem{lem}[thm]{Lemma}
\newtheorem{con}[thm]{Conjecture}
\newtheorem{clm}[thm]{Claim}
\theoremstyle{remark}
\newtheorem{rem}[thm]{Remark}

\theoremstyle{definition}
\newtheorem{defn}[thm]{Definition}
\section{Introduction}

Questions surrounding the underlying structure of the $N$- and $P$-positions in impartial combinatorial games (and associated issues of complexity and optimal strategies) continue to pose substantive challenges to researchers in the field.  For some impartial games, the $N$- and $P$-positions form readily characterizable patterns (such as in Nim, as shown by Bouton's analysis \cite{4}), while for others the structure is much more complex and appears to be resilient against standard analytical treatments (such as the game of Chomp). Indeed, a great deal of work has been devoted to understanding and characterizing the $N$- and $P$- positions in a variety of different games (see, e.g., \cite{3,15,16, 17, 18}).  However, rather than considering an individual impartial game in isolation, recent work \cite{7,8,12} suggests that new and sometimes surprising insights can be had by considering a given game {\em within the context of a family of `closely related' games}.   In particular, for a given  impartial game, the idea is to construct a set of similar games whose game graphs are all `close' to that of the original game (in some suitably defined metric).  One then examines how  the underlying structure of the $N$- and $P$- positions in this associated family of games compares to that of the original.  Indeed, this is the premise behind the earlier notion of ``generic games'' first introduced in \cite{8}:  Given an impartial combinatorial game $G$, one can create slightly perturbed versions of the original game by selecting a finite number of $P$-positions in $G$ and declaring them to be automatic $N$ positions. The class of games formed by arbitrary perturbations of this type has been dubbed the ``generic'' form of the game $G$.  The generic forms of Chomp, Nim, and Wythoff's games have been previously investigated using a combination of numerical methods and (nonrigorous) renormalization techniques from physics \cite{7,8}. It has been observed that in some cases (e.g., three-row Chomp) the original game and its associated family of generic games all share a similar underlying structure, which in turn has yielded a novel geometric characterization of Chomp's $N$- and $P$-positions. In other cases (e.g., three-heap Nim) it has been found that the family of generic games appears to have a rather different underlying structure from the original game. (See also \cite{14} for an alternative discussion of perturbed games.)

The present work on cofinite induced subgraph (CIS) games is a formalization and extension of some of this earlier work on generic games. Using three-heap Nim as a case study, it provides a new approach which not only yields novel results but for the first time allows 
{\em rigorous} mathematical statements to be made about the structure of the N and P positions in this family of Nim-like games.  In particular, Figure 1b illustrates the structure of the $P$-positions in ordinary (three-heap) Nim, while Figure 1a shows an example of a CIS-Nim game (these figures will be discussed more fully later).  Despite the striking structural differences between the two,  we prove that the overall structure of $P$-positions in CIS-Nim exhibits the same  `period-two scale invariance' (to be defined more precisely later) as  Nim. This work constitutes the first formal proofs regarding the properties of CIS-Nim and its relationship to Nim -- relationships that were conjectured to exist based on nonrigorous techniques from physics but never formally proven. Moreover, the proofs themselves, although geared for CIS-Nim, provide more general insights into other impartial games
and suggest a means of determining which structural properties of a game's $P$-positions are unstable and dependent on its specific end-game positions, and which properties are stable and independent of the details of the end game.  
\section{Background}
\subsection{Game Graphs}
Impartial combinatorial games are often represented as directed graphs called ``game graphs'' wherein the vertices of the game graph represent the possible positions of the game and there is a directed edge from vertex $u$ to vertex $v$ if and only if there is a legal move from $u$ to $v$. In this case, we will call $u$ a {\em parent} of $v$ and $v$ a {\em child} of $u$. Starting from any vertex in the game graph, the two players will alternate in moving along any directed edge from the current vertex to another vertex. If the current position has zero out-degree, the player whose turn it is to move has no legal options and is declared the loser. All game graphs of impartial combinatorial games are acyclic and have the property that from any given position there are only finitely many positions which are reachable using any sequence of moves. However, since we will think of these games generally and not limit ourselves to a single starting position, the game graphs we consider will not necessarily be finite. In fact, all of the game graphs discussed in this paper will have infinitely many vertices. 
\subsection{$\bm{P}$- and $\bm{N}$- Positions}
It follows from Zermelo's theorem \cite{11} that from any position either the next player to move can guarantee herself a win under optimal play, or the previous player can guarantee himself a win.  Any position in which the {\em next} player to move can force a win is known as an ``$N$-position,'' while if the {\em previous} player can force the win the position is called a ``$P$-position.''  
This partition of positions into $P$-positions and $N$-positions has the property that no $P$-position has a $P$-position child and every $N$-position has at least one $P$-position child. Further, this is the only partition which satisfies this property. Most importantly, 
knowledge of the $N$- and $P$-positions of a game defines an optimal strategy for the game:  A player at an $N$-position need only move his/her opponent to a $P$-position whenever possible to guarantee a win. 
This means that given a game, a primary goal is to determine the unique partition of positions into $P$-positions and $N$-positions. 
\section{Cofinite Induced Subgraph Games}
Once the the positions of a game graph are partitioned into $P$-positions and $N$-positions, one interesting question is to ask how stable this partition is to minor perturbations made in the game graph. One simple way of making perturbations in a game graph is to remove some finite number of vertices, resulting in an cofinite induced subgraph of the original game graph. 
\begin{defn}\label{defF}
Let $G$ be a game graph, and let $F$ be a finite set of vertices in $G$, called the set of ``forbidden positions''. Let $G-F$ denote the game whose game graph is the induced subgraph formed by removing from $G$ the vertices in $F$ and all edges incident to vertices in $F$. For a given game $G$, ``Cofinite Induced Subgraph $G$'' or ``CIS-$G$'' will refer to the general class of games of the form $G-F$ for any $F$.
\end{defn}
Loosely  speaking, the game $G-F$ is effectively $G$, except that players are forbidden from moving to any position in $F$. 
\begin{rem}\label{misere} If we define $F$ to be the set of all vertices in $G$ which do not have any children, then the game $G-F$ is equivalent to playing $G$ under mis\`{e}re play.
\end{rem}
  
\section{Nim and CIS-Nim}
\subsection{Nim}
Nim \cite{4} is a game played with multiple heaps of beans. Two players alternate taking any positive number of beans from any one heap. When all heaps are empty, the player whose turn it is to play has no move and is therefore declared the loser. 
We will restrict our attention to games of Nim with three heaps and we will consider the three heaps of beans unlabeled so that the positions in this game can be thought of as three-element multisets of non-negative integers, where the three numbers represent the number of beans in the three heaps. The children of a position $\{x,y,z\}$, are all positions of the form $\{x^\prime,y,z\}$ with $x^\prime<x$, $\{x,y^\prime,z\}$ with $y^\prime<y$, or $\{x,y,z^\prime\}$ with $z^\prime<z$. From now on, unless otherwise stated, the word ``Nim'' will refer to three-heap Nim with unlabeled heaps.
\subsection{CIS-Nim}
\begin{figure}
  \centering
    \subfloat[The $P$-positions of Nim-$\{\{1,1,0\}\}$ of the form $\{x,y,z\}$ with $x,y<5000$.]{\includegraphics[width=.7\textwidth]{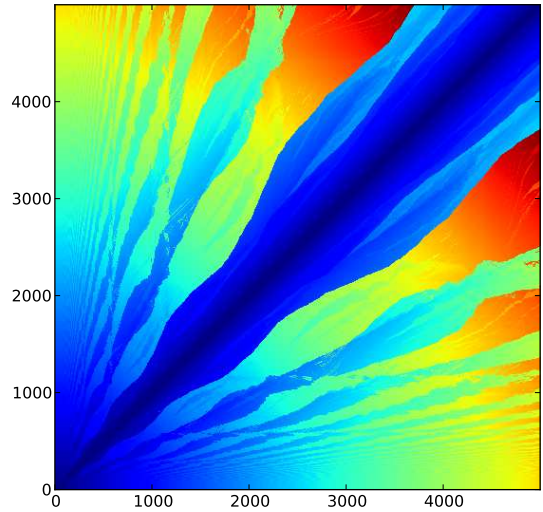}}\\
    \subfloat[The $P$-positions of Nim of the form $\{x,y,z\}$ with $x,y<5000$.]{\includegraphics[width=.35\textwidth]{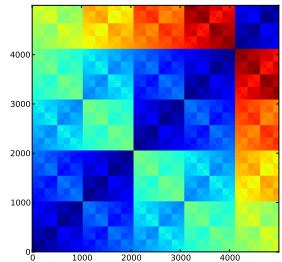}}
\quad
    \subfloat[The $P$-positions of Nim-$\{\{1,1,0\}\}$ of the form $\{x,y,z\}$ with $x,y<2500$.]{\includegraphics[width=.35\textwidth]{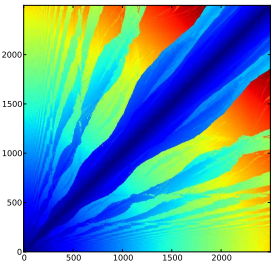}}
\caption{The structure of $P$-positions in Nim and Nim-$\{\{1,1,0\}\}$. Point $(x,y)$ is given with a color representing the unique $z$ such that $\{x,y,z\}$ is a $P$-position. Colors closer to red on the color spectrum represent larger $z$ values, and are normalized based on the largest value of $z$ in each figure.}
\end{figure}
Our goal is to analyze the class of Cofinite Induced Subgraph Nim, or CIS-Nim. Figures 1.a and 1.b show the structure of the $P$-positions in the standard game of Nim and in another instance of Cofinite Induced Subgraph Nim, or CIS-Nim. These structures are remarkably different. The data suggests that the structure of any instance of CIS-Nim looks like one of these two. Most games look similar to Figure 1.a. However, in special cases where none of the forbidden positions are $P$-positions of Nim or the forbidden positions are set up to correct any errors they introduce, the structure will look similar to 1.b. This is because games like Nim are unstable special cases in the generic class of CIS-Nim Games \cite{8,12}.

There is one significant property of Nim which also holds for all instances of Nim: The structure of $P$-positions is invariant up to scaling by a factor of two. Figures 1.a and 1.c demonstrate this period-two scale invariance.

A somewhat weaker but more formal way to state this period-two scale invariance is that given any instance, $\mathrm{Nim}-F$ of CIS-Nim, if we let $\pi(n)$ denote the number of $P$-positions of the form $\{x,y,z\}$, with $x$, $y$, and $z$ all less than $n$, then for any positive integer $n$, $\displaystyle\lim_{k\rightarrow\infty}\frac{\pi(n2^k)}{(n2^k)^2}$ converges to a nonzero constant. In the case of Nim, this can be shown directly using Bouton's well known analytical solution to Nim \cite{4}. The primary result of this paper is a proof that this period-two scale invariance holds for any game of CIS-Nim.
\section{Basic Properties of CIS-Nim}
Before we can prove the period-two scale invariance, we will have to establish some basic properties of CIS-Nim games. 
In Figure 1, the point $(x,y)$ is given a color representing $z$, where $z$ is the unique $P$-position of the form $\{x,y,z\}.$ These figures are only well defined because such a unique $P$-position of the form $\{x,y,z\}$ is known to always exist.
\begin{thm}\label{rowcol} 
Given any instance $\mathit{Nim}-F$ of CIS-Nim, for any nonnegative integers $x$ and $y$ there is a unique $z$ such that $\{x,y,z\}$ is a $P$-position in $\mathit{Nim}-F$. This value of $z$ satisfies the inequality $z\leq x+y+|F|$.
\end{thm}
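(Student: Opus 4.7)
The plan is to prove the theorem by strong induction on $x+y$, analyzing one ``column'' $\{\{x,y,z\}:z\ge 0\}$ at a time. Fixing $(x,y)$, I assume by the induction hypothesis that for every pair $(x',y')$ with $x'+y'<x+y$ there is a unique non-negative integer $z_0(x',y')$ such that $\{x',y',z_0(x',y')\}$ is a $P$-position of $\mathrm{Nim}-F$, and that $z_0(x',y')\le x'+y'+|F|$. The base case $x=y=0$ falls out as a special case of the inductive step below with both of the sets $B_1$ and $B_2$ (defined momentarily) empty.

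For the inductive step, define three subsets of $\mathbb{Z}_{\ge 0}$: $S=\{z\ge 0:\{x,y,z\}\notin F\}$, the legal third-heap values; $B_1=\{z_0(x',y):x'<x\}$, the heights at which a horizontal move from the column lands on a known $P$-position; and $B_2=\{z_0(x,y'):y'<y\}$, the analogous set for vertical moves. Let $A=S\setminus(B_1\cup B_2)$. The crux of the argument is to show that the unique $P$-position in the column is $\{x,y,\min A\}$. First, for any $z\in (B_1\cup B_2)\cap S$, the position $\{x,y,z\}$ has a legal move to an inductively identified $P$-position (which is not in $F$, since $P$-positions are by construction vertices of the game graph), so $\{x,y,z\}$ is an $N$-position. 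Second, any two $P$-positions in the same column are linearly ordered by the third coordinate, so the larger would have the smaller as a legal $P$-child---a contradiction---yielding uniqueness. Third, $\{x,y,\min A\}$ is itself a $P$-position: all its horizontal and vertical moves land at $N$-positions (by induction, together with $\min A\notin B_1\cup B_2$), and all of its in-column legal moves descend to elements of $S$ strictly smaller than $\min A$, which by definition of $\min A$ must lie in $S\cap(B_1\cup B_2)$ and are therefore $N$.

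Finally, to bound $\min A$, note that $|S^c|\le |F|$, $|B_1|\le x$, and $|B_2|\le y$, so in the window $\{0,1,\dots,x+y+|F|\}$ (which has $x+y+|F|+1$ elements) at least one element lies in $A$, giving $\min A\le x+y+|F|$ as required. The main obstacle is largely conceptual rather than computational: one must check at each step that a ``$P$-child'' invoked via the induction hypothesis is actually a legal vertex of $\mathrm{Nim}-F$, but this is automatic because $P$-positions are, by definition, vertices of the game graph and hence not in $F$. Once that subtlety is internalized the induction runs smoothly.
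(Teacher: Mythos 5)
Your proof is correct, but it is organized differently from the paper's. The uniqueness argument is identical in both (two $P$-positions in one column would be parent and child). For existence, however, the paper argues non-inductively and by contradiction: it first establishes uniqueness for \emph{all} pairs, then supposes every $\{x,y,z\}$ with $z\leq x+y+|F|$ is an $N$-position and derives a pigeonhole contradiction, since the at least $x+y+1$ valid positions in that window would each need a distinct $P$-position child of the form $\{x^\prime,y,z\}$ or $\{x,y^\prime,z\}$, of which there are at most $x+y$. You instead run a strong induction on $x+y$ and \emph{construct} the $P$-position explicitly as $\{x,y,\min A\}$, where $A$ excludes the at most $|F|$ forbidden heights and the at most $x+y$ heights blocked by the inductively known $P$-positions $z_0(x^\prime,y)$ and $z_0(x,y^\prime)$ --- a minimum-excludant argument in the style of Sprague--Grundy theory. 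The underlying count ($x+y+|F|+1$ slots minus at most $x+y+|F|$ obstructions) is the same in both proofs and yields the same bound. Your version costs a little more bookkeeping (you need both existence and uniqueness at the smaller pairs, and you must verify all three families of children of $\{x,y,\min A\}$ are $N$-positions, including the in-column ones), but it buys an explicit description of the winning reply: the unique $z$ is the least legal value not hit by any $P$-position reachable by reducing the other two heaps. Your handling of the forbidden-set subtlety --- that a $P$-position is automatically a vertex of $\mathit{Nim}-F$, so moves to it are always legal --- is exactly right and matches the implicit use in the paper.
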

\begin{proof}
To show uniqueness, assume by way of contradiction that there existed two $P$-positions $\{x,y,z_1\}$ and $\{x,y,z_2\}$. Without loss of generality, assume that $z_1<z_2$. This means that $\{x,y,z_1\}$ is a $P$-position child of $P$-position $\{x,y,z_2\}$, contradicting the fact that no $P$-position has a $P$-position child.

Next, assume by way of contradiction that every position of the form $\{x,y,z\}$ with $z\leq x+y+|F|$ is an $N$-position. There are $x+y+|F|+1$ values of $z$ satisfying this condition, and for all but at most $|F|$ of them, $\{x,y,z\}$ is valid position in $Nim-F$. There are at least $x+y+1$ $N$-positions of this form and each of these positions, $\{x,y,z\}$, therefore has a $P$-position child. This child cannot be of the form $\{x,y,z^\prime\}$ with $z^\prime<z$, so it must be of the form $\{x^\prime,y,z\}$ with $x^\prime<x$ or $\{x,y^\prime,z\}$ with $y^\prime<y$. There are $x$ different pairs of the form $(x^\prime,y)$ with $x^\prime<x$ and we know that for each of these pairs, there is at most one value of $z$ such that $\{x^\prime,y,z\}$ is a $P$-position. Similarly, there are $y$ different pairs of the form $(x,y^\prime)$ with $y^\prime<y$ and we know that for each of these pairs, there is at most one value of $z$ such that $\{x,y^\prime,z\}$ is a $P$-position. There are $x+y+1$ $N$-positions of the form $\{x,y,z\}$. At most $x$ of these positions can have a $P$-position child of the form $\{x^\prime,y,z\}$, and at most $y$ of them can have a $P$-position child of the form $\{x,y^\prime,z\}$. Therefore, at least one of them  has no $P$-position child, contradicting the fact that every $N$-position has a $P$-position child. Therefore, there is at least one $P$-position of the form $\{x,y,z\}$ with $z\leq x+y+|F|$. This means that there is exactly one $P$-position of the form $\{x,y,z\}$, and it satisfies the inequality $z\leq x+y+|F|$.
\end{proof}
The bound of $z\leq x+y+|F|$ given in Theorem \ref{rowcol} is only necessary for small valued $P$-positions. For all but finitely many $P$-positions near $\{0,0,0\},$ we can improve this bound to $z\leq x+y.$
\begin{defn}\label{defC} 
Given an instance $\mathrm{Nim}-F$ of CIS-Nim, let $F_{max}$ equal the largest element (largest number of beans in a single heap) of any position in $F$.
\end{defn}
\begin{thm}\label{rowcol2}
Given any instance $\mathit{Nim}-F$ of CIS-Nim, if $\{x,y,z\}$ is a $P$-position with $z>2F_{max}+|F|$, then $z\leq x+y$
\end{thm}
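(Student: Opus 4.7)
The plan is to prove Theorem \ref{rowcol2} by strong induction on the maximum coordinate of the $P$-position, using the uniqueness-and-bound result of Theorem \ref{rowcol} to control the $P$-positions that could serve as $P$-children. Without loss of generality I assume $z = \max(x, y, z)$ and $x \le y$; if $y = z$ then the desired $z \le x + y$ reduces to $0 \le x$, so I focus on $y < z$. The base case $z \le 2F_{\max} + |F|$ is vacuous, since the hypothesis fails.

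For the inductive step, assume the theorem holds for every $P$-position whose maximum coordinate is strictly less than $z$, and suppose for contradiction that $\{x, y, z\}$ is a $P$-position with $z > 2F_{\max} + |F|$ and $z > x + y$. I then consider the positions $\{x, y, z'\}$ for $z' \in (\max(y,\,2F_{\max} + |F|),\, z - 1]$. Each such position is not in $F$ (since $z' > F_{\max}$), and is not a $P$-position (by the uniqueness in Theorem \ref{rowcol}, since the unique $P$-position with pair $(x, y)$ has third coordinate $z$); hence it is an $N$-position and must have a $P$-child. Because no $\{x, y, z''\}$ with $z'' < z$ is a $P$-position, this $P$-child must be of the form $\{x', y, z'\}$ with $x' < x$ or $\{x, y', z'\}$ with $y' < y$. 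In either case $z' > y \ge x > x'$ (respectively $z' > y > y'$), so $z'$ is the strict maximum of this $P$-child, and $z' > 2F_{\max} + |F|$; applying the inductive hypothesis then yields $z' \le x' + y < x + y$ (respectively $z' \le x + y' < x + y$). Consequently, for any $z' \ge x + y$ in the selected range no $P$-child exists, contradicting the $N$-position property.

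This argument produces the required contradiction as soon as the range $(\max(y,\,2F_{\max} + |F|),\, z - 1]$ contains an integer at least $x + y$, which holds once $z \ge 2F_{\max} + |F| + 2$. \emph{The main obstacle is the boundary case $z = 2F_{\max} + |F| + 1$,} where the chosen range is empty and the inductive step above does not directly apply. To close this case I would invoke Theorem \ref{rowcol} once more to deduce $x + y \ge z - |F| = 2F_{\max} + 1$, which forces $\max(x, y) > F_{\max}$ and hence guarantees that no $\{x, y, z'\}$ lies in $F$. A sharper accounting then applies: by Theorem \ref{rowcol} any $P$-child $\{x', y, z'\}$ of some $\{x, y, z'\}$ with $z' \ge x + y$ must satisfy $x' \ge z' - y - |F| \ge x - |F|$, leaving at most $|F|$ candidate values of $x'$ (and symmetrically at most $|F|$ for $y'$). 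Comparing this bound of $2|F|$ available $P$-children against the $z - x - y$ $N$-positions $\{x, y, z'\}$ with $z' \in [x + y, z - 1]$, and combining with the inductive control on those $P$-children whose third coordinate does exceed $2F_{\max} + |F|$, should close the boundary and complete the proof.
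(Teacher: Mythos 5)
Your inductive strategy is genuinely different from the paper's (the paper uses a direct counting argument with no induction), and the core inductive step is sound where it applies, but the induction does not close: two boundary situations are left open, and your sketch for one of them does not suffice. First, the case $x=0$, $z=y+1$ is not covered and you do not flag it: there the contradiction hypothesis $z>x+y$ is just $z>y$, and your range $(\max(y,2F_{max}+|F|),\,z-1]=(y,y]$ is empty, so no usable $z'$ exists no matter how large $z$ is. This case is real --- it is essentially the content of Corollary \ref{xi0} --- and needs a separate argument (e.g.\ that $\{0,y,y\}$ is a legal child of $\{0,y,y+1\}$ every one of whose own children is also a child of $\{0,y,y+1\}$, forcing $\{0,y,y\}$ to be a $P$-position child of a $P$-position). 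Second, for the boundary case $z=2F_{max}+|F|+1$ that you do acknowledge, the proposed repair falls short: the ``sharper accounting'' caps the supply of candidate $P$-children at $2|F|$ against the $z-x-y$ positions needing them, which yields only $z\le x+y+2|F|$, and the ``inductive control'' you hope to combine with it is vacuous there, since every relevant $z'$ satisfies $z'\le 2F_{max}+|F|$ and so none of those $P$-children meets the hypothesis of the inductive statement.

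For comparison, the paper counts globally rather than only over $z'\ge x+y$: from $z\le x+y+|F|$ (Theorem \ref{rowcol}) one gets $\max(x,y)>F_{max}$, so all $z$ positions $\{x,y,z'\}$ with $z'<z$ are legal and, being children of a $P$-position, are $N$-positions; each needs its own $P$-position child of the form $\{x',y,z'\}$ with $x'<x$ or $\{x,y',z'\}$ with $y'<y$, and the uniqueness in Theorem \ref{rowcol} caps the total supply of such $P$-positions at $x+y$, giving $z\le x+y$ in one stroke with no induction and no boundary cases. Your ``sharper accounting'' is essentially this argument restricted to a sub-range; run over the full range $z'<z$ and without the lower bound $x'\ge z'-y-|F|$, it proves the theorem outright. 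As written, however, the proposal has genuine gaps.
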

\begin{proof}
Since $\{x,y,z\}$ is a $P$-position, we know from Theorem \ref{rowcol}, that $z\leq x+y+|F|$. Therefore, $2F_{max}<z-|F|\leq x+y$, so either $F_{max}<x$ or $F_{max}<y$. Therefore, each of the $z$ positions of the form $\{x,y,z^\prime\}$ with $z^\prime<z$ has an element greater than $F_{max}$, and is therefore not in $F$. Since all of these positions are also children of the $P$-position, $\{x,y,z\}$, we know that each of these positions are actually $N$-positions. Therefore, each of these $z$ positions has a distinct $P$-position child of the form $\{x^\prime,y,z^\prime\}$ or $\{x,y^\prime,z^\prime\}$ with $x^\prime<x$, $y^\prime<y$, and $z^\prime<z$. Similarly to in Theorem \ref{rowcol}, there can be at most $x+y$ such $P$-positions, so $z\leq x+y.$
\end{proof}
\begin{cor}\label{xi0}
Given any instance $\mathit{Nim}-F$ of CIS-Nim, for all $n>2F_{max}+|F|$, $\{n,n,0\}$ is a $P$-position.
\end{cor}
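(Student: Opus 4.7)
The plan is a short proof by contradiction. I would apply Theorem \ref{rowcol2} to a hypothetical $P$-position child of $\{n,n,0\}$; since every such child has a $0$-heap and another heap of size $n$, putting $n$ into the ``$z$''-slot of Theorem \ref{rowcol2} forces $n\leq k$ for some $k<n$, an absurdity.

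First I would check that $\{n,n,0\}$ is itself a legal vertex of $\mathit{Nim}-F$: if it lay in $F$, then $F_{max}\geq n$, contradicting $n>2F_{max}+|F|$. So $\{n,n,0\}$ is either a $P$- or an $N$-position. Suppose for contradiction that it is an $N$-position; then it must have at least one $P$-position child in $\mathit{Nim}-F$. Every child of $\{n,n,0\}$ is obtained by reducing one of the two $n$-heaps to some $k$ with $0\leq k<n$ (the $0$-heap admits no move), so any child is the multiset $\{k,n,0\}$. Reading this $P$-position as $(x,y,z)=(k,0,n)$, we have $z=n>2F_{max}+|F|$, and so Theorem \ref{rowcol2} yields $n=z\leq x+y=k$, contradicting $k<n$. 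Hence $\{n,n,0\}$ must be a $P$-position.

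I do not anticipate any significant obstacle. The one ``creative'' step is assigning $n$ to the $z$-coordinate of Theorem \ref{rowcol2} --- this is legitimate because positions are unordered multisets, and it is precisely the assignment that makes the $z\leq x+y$ bound sharpest. All of the combinatorial work has already been carried out in Theorems \ref{rowcol} and \ref{rowcol2}.
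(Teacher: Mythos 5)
Your proof is correct and follows essentially the same route as the paper's: assume $\{n,n,0\}$ is not a $P$-position, note that any $P$-position child has the form $\{k,n,0\}$ with $k<n$, and apply Theorem \ref{rowcol2} with $n$ in the $z$-slot to get the contradiction $n\leq k$. The only addition is your explicit check that $\{n,n,0\}\notin F$, which the paper leaves implicit but which is a harmless (and correct) extra sentence.
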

\begin{proof}
If $\{n,n,0\}$ were not a $P$-position, it would have a $P$-position child of the form $\{n,n^\prime,0\}$ with $n^\prime<n$. In this case, $n>2F_{max}+|F|$, but $n>n^\prime+0$, contradicting Theorem \ref{rowcol2}.
\end{proof}
In the game of Nim, $\{n,n,0\}$ is a $P$-position for all $n$. This means that if we only consider positions of the form $\{x,y,0\},$ the structures of $P$-positions in Nim and in generic games of CIS-Nim agree on all but finitely many small valued positions. Positions of the form $\{x,y,0\},$ are effectively positions in two-heap Nim games, so Corollary \ref{xi0} tells that two-heap Nim is stable in that large valued $P$-positions are unaffected by removal of small valued positions.

It turns out that if we fix the size of any one heap, the structure of $P$-positions is eventually additively periodic. This is an generalization of Corollary \ref{xi0} which shows that if we fix one heap to be of size 0, the structure of $P$-positions is additively periodic with period 1.  
\begin{clm}\label{periodic}
For any $x$, there exists a $p$ and a $q$ such that for any $y>q$, $\{x,y,z\}$ is a $P$-position if and only if $\{x,y+p,z+p\}$ is a $P$-position.
\end{clm}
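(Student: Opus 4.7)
The plan is to prove Claim~\ref{periodic} by induction on $x$. Fix $x$ and let $f_x(y)$ denote the unique value of $z$ for which $\{x,y,z\}$ is a $P$-position; Theorem~\ref{rowcol} guarantees this is well defined, so the claim reduces to showing $f_x(y+p) = f_x(y)+p$ for all $y>q$. The key structural observation is that for $y > F_{max}$ no position $\{x,y,z\}$ lies in $F$, and a direct analysis of the $P$-condition --- noting that if $f_x(y') = z$ then $\{x,y',z\}$ is a $P$-position and hence not in $F$ --- yields the mex identity
\[
f_x(y) \;=\; \mathrm{mex}\bigl(A_x(y) \cup B_x(y)\bigr), \qquad A_x(y) := \{f_{x'}(y):x'<x\},\quad B_x(y) := \{f_x(y'):y'<y\}.
\]
The base case $x=0$ is immediate from Corollary~\ref{xi0}, which gives $f_0(y)=y$ for $y > 2F_{max}+|F|$, so $p=1$ suffices.

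For the inductive step, assume the claim for every $x'<x$ with periods $p_{x'}$ and thresholds $q_{x'}$, and set $P := \mathrm{lcm}(p_0,\ldots,p_{x-1})$ and $Y_0 := \max(q_0,\ldots,q_{x-1},\,2F_{max}+|F|)$. Applying Theorems~\ref{rowcol} and~\ref{rowcol2} with each of the three (unordered) coordinates in turn yields the two-sided bound $y-x \leq f_x(y) \leq y+x+|F|$ for $y > Y_0$, so the normalized set $\widetilde A_x(y) := A_x(y)-y$ is bounded and is $P$-periodic in $y$ by the inductive hypothesis. Set $U(y) := y+x+|F|$ and define the \emph{deficit} set $E(y) := \{0,\ldots,U(y)-1\}\setminus B_x(y)$. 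Because $f_x$ is injective (two equal $P$-rows $f_x(y_1)=f_x(y_2)$ with $y_1<y_2$ would expose a $P$-child of a $P$-position), $|B_x(y)|=y$ and hence $|E(y)| = x+|F|$ is constant, and the mex identity rewrites as $f_x(y) = \min\bigl(E(y)\setminus A_x(y)\bigr)$.

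The critical step is to show that $f_x$ is \emph{surjective} onto $\mathbb{Z}_{\geq 0}$. Granted this, every element of $E(y)$ is a future value $f_x(y'')$ for some $y''\geq y$, and the lower bound $f_x(y'')\geq y''-x$ confines $E(y)\subseteq[y-x,\,U(y)-1]$; consequently $\widetilde E(y) := E(y)-y$ lies in the finite collection of $(x+|F|)$-subsets of $[-x,\,x+|F|-1]$. To prove surjectivity, suppose for contradiction $s \notin \mathrm{range}(f_x)$. Then $s\in E(y)$ for every $y$ with $U(y)>s$, and combining $f_x(y)=\min(E(y)\setminus A_x(y))$ with the lower bound $f_x(y) \geq y-x > s$ (valid for $y > s+x$) forces $s\in A_x(y)$ for all such $y$. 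But by injectivity each $f_{x'}$ takes the value $s$ at most once, so summing over $x'<x$ gives at most $x$ values of $y$ with $s\in A_x(y)$ --- contradicting the infinite family. Hence $f_x$ is surjective.

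It remains to extract periodicity. The one-step recursion $E(y+1) = (E(y)\setminus\{f_x(y)\})\cup\{U(y)\}$ translates to an update on $\widetilde E(y)$ that depends only on $\widetilde E(y)$ and $\widetilde A_x(y)$, since $f_x(y)-y = \min(\widetilde E(y)\setminus \widetilde A_x(y))$ and $U(y)-(y+1) = x+|F|-1$. As $\widetilde A_x$ is determined by $y\bmod P$ past $Y_0$, the pair $\bigl(\widetilde E(y),\,y\bmod P\bigr)$ evolves deterministically on a finite state space, so by pigeonhole it is eventually periodic with some period $p$ (necessarily a multiple of $P$). This yields $f_x(y+p)=f_x(y)+p$ for $y$ past some threshold $q$, completing the induction. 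The main technical obstacle is the surjectivity argument; once the deficit set is confined to a bounded window near $y$, the finite-state pigeonhole conclusion follows automatically.
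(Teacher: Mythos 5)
The paper does not actually prove Claim~\ref{periodic}: it explicitly declines to, remarking only that the argument is ``almost identical'' to one of Abrams and Cowen-Morton \cite{2}. Your proposal therefore cannot be compared line-by-line with anything in the paper, but it is a sound, self-contained proof, and it is of exactly the flavor the authors gesture at: reduce to the mex recurrence $f_x(y)=\mathrm{mex}(A_x(y)\cup B_x(y))$ (valid once $y>F_{max}$ so no position in the column is forbidden), use injectivity and surjectivity of $f_x$ together with the two-sided bound $y-x\leq f_x(y)\leq y+x+|F|$ to confine the ``deficit'' set to a window of bounded width around $y$, and then run a finite-state pigeonhole argument with the inductively-periodic $\widetilde A_x$ as input. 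The surjectivity argument (an omitted value $s$ would have to lie in $A_x(y)$ for all large $y$, but injectivity of each $f_{x'}$ caps this at $x$ values of $y$) is correct and is the one genuinely nontrivial step; the rest checks out against Theorems~\ref{rowcol} and~\ref{rowcol2} and Corollary~\ref{xi0}.

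One small repair is needed. You set $E(y)=\{0,\dots,U(y)-1\}\setminus B_x(y)$ with $U(y)=y+x+|F|$ and write $f_x(y)=\min\bigl(E(y)\setminus A_x(y)\bigr)$, which silently assumes both that $f_x(y)<U(y)$ and that $E(y)\setminus A_x(y)\neq\emptyset$. When $|F|\geq 1$ and $y$ exceeds your threshold, Theorem~\ref{rowcol2} gives $f_x(y)\leq x+y<U(y)$ and $|E(y)|=x+|F|>x\geq|A_x(y)|$, so both assumptions hold; but when $F=\emptyset$ (ordinary Nim) one can have $f_x(y)=x+y=U(y)$ and $E(y)\setminus A_x(y)=\emptyset$ (e.g.\ $x=1$, $y=2$). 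Either dispose of $F=\emptyset$ separately via Bouton's formula, or enlarge the window to $E(y)=\{0,\dots,U(y)\}\setminus B_x(y)$, which makes $|E(y)|=x+|F|+1>|A_x(y)|$ unconditionally and leaves the one-step update and the pigeonhole argument unchanged. With that adjustment the proof is complete.
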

We will not prove this claim, as it is technical and unnecessary for our main result. However, the proof is almost identical to an argument given L. Abrams and D. S. Cowen-Morton for a game with similar structure \cite{2}.
\section{Period-Two Scale Invariance in CIS-Nim}
The primary result of this paper is the proof of the following theorem, which is a formulation of the observation that the overall structure of $P$-positions in any game of CIS-Nim is invariant under scaling by a factor of two.  
\begin{thm}[Period-Two Scale Invariance]\label{final}
Given any instance $\mathit{Nim}-F$ of CIS-Nim, let $\pi(n)$ denote the number of $P$-positions (assuming unlabeled heaps) in $\mathit{Nim}-F$ of the form $\{x,y,z\}$, with $x$, $y$, and $z$ all less than $n$. For any positive integer $n$, $\displaystyle\lim_{k\rightarrow\infty}\frac{\pi(n2^k)}{(n2^k)^2}$ converges to a nonzero constant.
\end{thm}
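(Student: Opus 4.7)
The plan is to reduce Theorem~\ref{final} to an asymptotic doubling identity for a two-dimensional counting function. By Theorem~\ref{rowcol}, for each ordered pair $(x,y)$ of nonnegative integers there is a unique $z=\phi_F(x,y)$ with $\{x,y,z\}$ a $P$-position in $\mathrm{Nim}-F$. Let $A_F(N)$ denote the number of ordered pairs $(x,y)$ with $0\le x,y<N$ and $\phi_F(x,y)<N$; equivalently, $A_F(N)$ counts ordered $P$-triples in $[0,N)^3$. Because positions with a repeated coordinate contribute only $O(N)$ (Theorem~\ref{rowcol} gives at most one $z$ with $\{a,a,z\}$ a $P$-position for each $a$), we have $\pi(N)=A_F(N)/6+O(N)$, so it suffices to prove that $A_F(n2^k)/(n2^k)^2$ tends to a nonzero constant as $k\to\infty$.

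For the unperturbed case $F=\emptyset$ this is immediate from Bouton's formula $\phi_\emptyset(x,y)=x\oplus y$. Writing $x=2x'+a$ and $y=2y'+b$ with $a,b\in\{0,1\}$, the identity $(2x'+a)\oplus(2y'+b)=2(x'\oplus y')+(a\oplus b)$ shows that $\phi_\emptyset(x,y)<2N$ iff $x'\oplus y'<N$. Summing over the four choices of $(a,b)$ gives $A_\emptyset(2N)=4A_\emptyset(N)$ exactly, so $A_\emptyset(N)/N^2$ is already constant along any doubling sequence, with limit $A_\emptyset(n)/n^2$.

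For a general $F$ the goal is to establish the asymptotic analogue $A_F(2N)=4A_F(N)+o(N^2)$ along $N=n2^k$. My approach is to construct, for $N$ sufficiently large, an approximate $4$-to-$1$ correspondence between $P$-positions of $\mathrm{Nim}-F$ with all coordinates in $[0,2N)$ and pairs consisting of a $P$-position $\{x,y,z\}$ with $x,y,z<N$ in some canonical related CIS-Nim instance (ideally a bounded alteration of $\mathrm{Nim}-F$), together with a parity triple $(a,b,c)\in\{0,1\}^3$ satisfying $a\oplus b\oplus c=0$. The available tools are Theorem~\ref{rowcol2} (pinning down $\phi_F(x,y)\le x+y$ outside a finite burn-in), Corollary~\ref{xi0} (so that the near-axis $P$-positions $\{n,n,0\}$ eventually agree with Nim), and especially Claim~\ref{periodic} (eventual row/column additive periodicity of $\phi_F$); together these should pin down the high-order bit structure of $\phi_F$ enough to transplant the XOR doubling, confining the $F$-dependent deviations to a boundary region of area $o(N^2)$.

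The main obstacle is that Figure~1.a makes visually clear that $\phi_F$ can be globally very different from XOR, so no pointwise doubling identity is available and one must show that the aggregate count is doubling-invariant even though the pattern is not. The core technical step I would attempt is a dyadic decomposition of $[0,2N)^2$ combined with Claim~\ref{periodic}: within each sufficiently large dyadic block, the count of $(x,y)$ with $\phi_F(x,y)$ in a prescribed dyadic range should depend only on the block's scale, not on its translation, up to lower-order errors in a boundary layer. Nonvanishing of the limit then follows from Corollary~\ref{xi0}, which already produces $N-O(1)$ axis $P$-positions $\{n,n,0\}$ with $n<N$, plus a positive interior density inherited from the Nim-like large-scale bit-parity structure.
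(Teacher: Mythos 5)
Your reduction to the ordered count $A_F(N)$ and the $O(N)$ handling of repeated coordinates are fine, but the plan founders at two specific points. First, the target identity $A_F(2N)=4A_F(N)+o(N^2)$ is not strong enough to prove the theorem: it only says that consecutive terms of the bounded sequence $a_k=A_F(n2^k)/(n2^k)^2$ satisfy $a_{k+1}-a_k=o(1)$, and a bounded sequence with vanishing successive differences need not converge. You would need summable errors, or some form of monotonicity. This is precisely where the paper invests most of its effort: it builds hole-free, exactly self-similar approximations $S_m$ to the set of $P$-positions, proves (Lemma \ref{SnSm}) that passing from $S_n$ to $S_m$ moves points only in directions that weakly decrease both $x-y$ and $x-2y$, and then uses the quantity $g$ of Definition \ref{defgh} as a potential function (Lemmas \ref{gprop} and \ref{limith}): if the rescaled counts oscillated by a fixed amount infinitely often, the potential would have to decrease without bound while starting finite and remaining nonnegative. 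Your proposal contains no mechanism playing this role, and without one the convergence claim is unsupported.

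Second, the core technical step you propose --- that in each sufficiently large dyadic block the number of $(x,y)$ with $\phi_F(x,y)$ in a prescribed dyadic range depends only on the block's scale and not on its translation --- is false already for ordinary Nim: for the block $[a2^j,(a+1)2^j)\times[b2^j,(b+1)2^j)$ the number of pairs with $x\oplus y\in[c2^j,(c+1)2^j)$ is $4^j$ if $c=a\oplus b$ and $0$ otherwise, so it depends essentially on $(a,b)$. Moreover, Claim \ref{periodic} is stated in the paper without proof, and its period $p$ and threshold $q$ depend on the fixed heap $x$ with no uniformity in $x$, so it cannot control the positive proportion of rows with $x$ comparable to $N$. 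Finally, your nonvanishing argument is also inadequate: the positions $\{n,n,0\}$ from Corollary \ref{xi0} contribute only $O(N)$ of the required $\Theta(N^2)$, and the ``Nim-like large-scale bit-parity structure'' is exactly what Figure 1.a shows CIS-Nim need not have. The paper instead obtains nonvanishing directly from Theorem \ref{rowcol}: for each of the roughly $N^2/4$ pairs $(y,z)$ with $y,z<(N-|F|)/2$, the unique completing $x$ satisfies $x\le y+z+|F|<N$, which already forces $\pi(N)\geq (N-|F|)^2/24$.
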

We note that an analog of this result holds for the much simpler case of ordinary Nim (see Figure 2). In the case of ordinary Nim, it is possible to give an explicit formula for $\pi(x)$ as $\frac{3x^2-6xy+4y^2+3x+2}{6}$, where $y$ is the greatest power of 2 less than or equal to $x$. 
\begin{figure}
  \centering
\includegraphics[width=.7\textwidth]{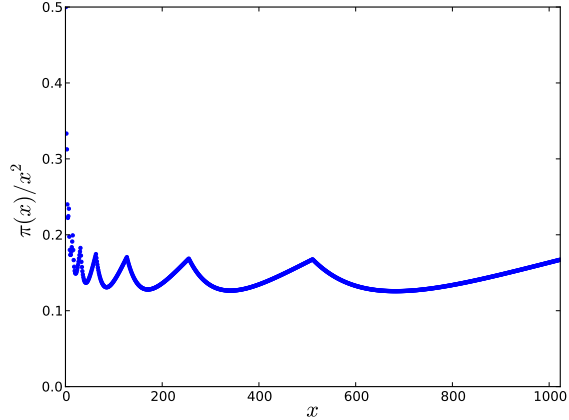}\\
\caption{Plot of $\pi(x)/x^2$ vs. $x$ for ordinary Nim, illustrating a period-two scale invariance. }
\end{figure}

To prove the main theorem, we will first need to prove several Lemmas. 
\subsection{The Set $\bm{S}$}
To start, we will need to think about this problem in terms of a new set $S,$ which encodes much of the information about the structure of the $P$-positions as a set of ordered pairs.
\begin{defn}\label{defA} 
Given any instance $\mathrm{Nim}-F$ of CIS-Nim, let $S$ be the infinite set of ordered pairs of integers such that $(x,y)\in S$ if and only if there exists a $z$ such that $z<y<x$ and $\{x,y,z\}$ is a $P$-position in $\mathrm{Nim}-F$. 
\end{defn}
\begin{defn}\label{defBL} 
Given any instance $\mathrm{Nim}-F$ of CIS-Nim, for any nonnegative integers $x$ and $y$, let $r(x,y)$ be the number of elements of $S$ of the form $(x^\prime,y)$ with $x^\prime\geq x$. Let $b(x,y)$ be the number of elements of $S$ of the form $(x,y^\prime)$ with $y^\prime\leq y$.
\end{defn}
It will be helpful to visualize $S$ as a subset of the plane, as shown in Figure 3. With this visualization in mind, the definitions of $r(x,y)$ and $b(x,y)$ are very natural as the number of points directly to the right or equal to $(x,y)$ and the number of points to below or equal to $(x,y)$ respectively. Notice that there are points $(x,y)\notin S$ with $x>y$ for which $b(x,y)$ is positive. We will refer to such points as ``holes.''
The following two Lemmas will prove some properties of $r(x,y)$ and $b(x,y)$. Lemma \ref{B+2L} captures the essence of why CIS-Nim follows the period-two scale invariance. In fact, if not for the existence of holes, the period-two scale invariance would follow almost directly from Lemma \ref{B+2L}. However, holes do exist, which is why we will need Lemma \ref{B<L} which places limitations on the ways which holes can show up.
\begin{figure}
  \centering
\includegraphics[width=.7\textwidth]{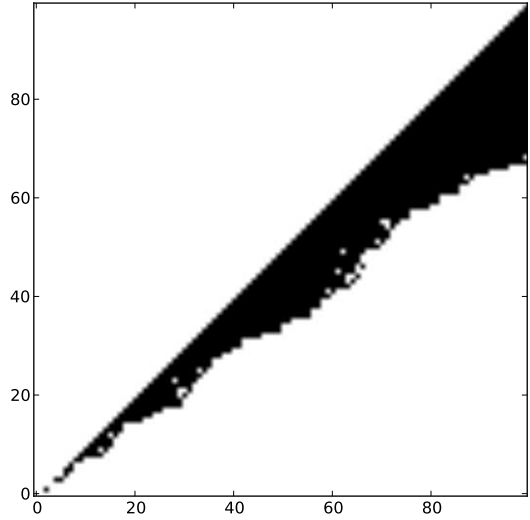}\\
\caption{The set of all points $(x,y)\in S$ with $x,y<100$ for the game Nim-$\{\{1,1,0\}\}$}
\end{figure}
\begin{lem}\label{B+2L}
Given any instance $\mathit{Nim}-F$ of CIS-Nim, for all $x>4F_{max}+3|F|$, $r(x,x)+2b(x,x)+1=x$.
\end{lem}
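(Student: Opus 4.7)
The plan is to recast the lemma as a counting argument on $\{0, 1, \ldots, x-1\}$ driven by a natural involution on the nonnegative integers.  By Theorem \ref{rowcol}, for each fixed $x$ and each $y \geq 0$ there is a unique $z = g_x(y)$ such that $\{x, y, z\}$ is a $P$-position in $\mathit{Nim}-F$.  Applying the uniqueness statement of Theorem \ref{rowcol} a second time, to the pair $(x, z)$, yields $g_x(g_x(y)) = y$, so $g_x$ is an involution on $\mathbb{Z}_{\geq 0}$.  In this language the two quantities appearing in the lemma are easy to interpret: $b(x,x)$ is the number of unordered $2$-cycles of $g_x$ contained entirely in $[0, x-1]$, and $r(x,x)$ is the number of $y \in [0, x-1]$ with $g_x(y) > x$.

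With this in hand, I would partition $\{0, 1, \ldots, x-1\}$ according to which of the following holds for each $y$: (a) $g_x(y) = y$; (b) $g_x(y) \in [0, x-1] \setminus \{y\}$; (c) $g_x(y) = x$; (d) $g_x(y) > x$.  Writing $f_x$ for the number of fixed points of $g_x$ in $[0, x-1]$ and $e_x = \mathbf{1}[g_x(x) < x] \in \{0,1\}$, counting elements in each class yields the identity
\[
x \;=\; f_x \;+\; 2\,b(x,x) \;+\; e_x \;+\; r(x,x).
\]
The lemma therefore reduces to the assertion that $e_x = 1$ and $f_x = 0$ whenever $x > 4F_{max} + 3|F|$.

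The first equality, $e_x = 1$, is immediate from Corollary \ref{xi0}: for $x > 2F_{max} + |F|$ the position $\{x, x, 0\}$ is a $P$-position, so $g_x(x) = 0 < x$.  Showing $f_x = 0$ is the substantive step and is where I expect the main difficulty to lie: I need to rule out every $y$ with $0 \leq y < x$ such that $\{x, y, y\}$ is a $P$-position.  The case $y = 0$ is immediate from the uniqueness statement of Theorem \ref{rowcol} applied to the pair $(x, 0)$: because $\{x, x, 0\}$, viewed as $\{x, 0, z\}$ with $z = x$, is already a $P$-position, $\{x, 0, 0\}$ cannot be.  For $0 < y < x$: assuming $\{x, y, y\}$ is a $P$-position, Theorem \ref{rowcol2} applied with $x$ as the maximum coordinate gives $x \leq 2y$, so $y \geq x/2$.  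For $x$ above the stated threshold this forces $y > 2F_{max} + |F|$, so Corollary \ref{xi0} implies $\{y, y, 0\}$ is also a $P$-position; uniqueness applied to the pair $(y, y)$ then contradicts the existence of the distinct $P$-position $\{y, y, x\}$, since $x > 0$.  Combining, $f_x + e_x = 1$ and the identity above becomes $r(x, x) + 2\,b(x, x) + 1 = x$, as required.
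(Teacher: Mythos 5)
Your proof is correct and follows essentially the same route as the paper's: both partition the $x$ values of $y<x$ according to how the unique third coordinate $z$ compares with $y$ and $x$, identify the classes with $r(x,x)$, $1$, and $2b(x,x)$, and kill the $z=y$ class by combining Corollary \ref{xi0} with a size bound (you use Theorem \ref{rowcol2}, the paper uses Theorem \ref{rowcol}; either works). The involution framing via $g_x$ is a clean repackaging of the paper's observation that the classes $z<y$ and $y<z<x$ each contribute $b(x,x)$, but it is not a genuinely different argument.
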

\begin{proof}
For each of the $x$ values of $y$ satisfying $y<x$, there exists a unique $z$, such that $\{x,y,z\}$ is a $P$-position. For each value of $y$, this unique $z$ will satisfy exactly one of the following: $z>x$, $z=x$, $x>z>y$, $z=y$, or $z<y$.
The number of values of $y$ which satisfy $z>x$ is exactly the number of $P$-positions of the form $\{z,x,y\}$ with $z>x>y$, which is $r(x,x)$. The number of values of $y$ which satisfy $z=x$ is $1$, since $\{x,x,0\}$ is the only $P$-position of the form $\{x,z,y\}$ with $x=z$. The number of values of $y$ which satisfy $x>z>y$ is exactly the number of $P$-positions of the form $\{x,z,y\}$ with $x>z>y$, which is $b(x,x)$. If there were a $P$-position of the form $\{y,z,x\}$ with $y=z$, then $\{y,y,0\}$ would not be a $P$-position, which implies by Corollary \ref{xi0} that $y\leq 2F_{max}+|F|$. By Theorem \ref{rowcol}, this would imply that $x\leq (2F_{max}+|F|)+(2F_{max}+|F|)+|F|=4F_{max}+3|F|$, a contradiction. Therefore, the number of values of $y$ which satisfy $z=y$ is $0$. The number of values of $y$ which satisfy $z<y$ is exactly the number of $P$-positions of the form $\{x,y,z\}$ with $x>y>z$, which is $b(x,x)$. Adding all these together, we get $r(x,x)+2b(x,x)+1$, and we know that the total number of values of $y$ less than $x$ is exactly $x$, so $r(x,x)+2b(x,x)+1=x$.
\end{proof}
\begin{lem}\label{B<L}
Given any instance $\mathit{Nim}-F$ of CIS-Nim, for all $x>y>4F_{max}+3|F|$, if $(x,y)\notin S$ then $b(x,y)\geq r(x,y)$.
\end{lem}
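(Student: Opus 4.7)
The plan is to construct an injection $\psi$ from the set of row-$y$ pairs in $S$ right of $x$ (counted by $r(x,y)$) into the set of column-$x$ pairs in $S$ at height below $y$ (counted by $b(x,y)$), exploiting the uniqueness of the ``third-heap function'' $f(\{a,b\})$ (the unique $c$ with $\{a,b,c\}$ a $P$-position) guaranteed by Theorem \ref{rowcol}. First I would note that since $(x,y) \notin S$ and $y > 2F_{max}+|F|$, the value $z_0 := f(\{x,y\})$ satisfies $z_0 > y$ (the case $z_0 = y$ would make $\{x,y,y\}$ a $P$-position with $P$-child $\{y,y,0\}$, impossible by Corollary \ref{xi0}). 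Given any $(x',y) \in S$ with $x' \geq x$, let $z' := f(\{x',y\}) < y$. The substituted position $\{x,y,z'\}$ lies in the game (as $y > F_{max}$ keeps it out of $F$) and is not $P$ (since $z' \neq z_0$), so it is $N$ and must have a $P$-child. Uniqueness of $f$ eliminates all candidate $P$-children except those of the form $\{x, y'', z'\}$: reducing $x$ would need $f(\{y,z'\})$ to be smaller than $x$, but this $P$-extension equals $x' \geq x$; reducing $z'$ would need $f(\{x,y\}) < z'$, but $f(\{x,y\}) = z_0 > z'$. So the $P$-child is $\{x, y'', z'\}$ with $y'' := f(\{x, z'\}) < y$, and I define $\psi(x',y) := (x, \max(y'', z'))$.

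To see that $\psi$ lands in $b(x,y)$, I must rule out the degenerate case $y'' = z'$. This would make $\{x, z', z'\}$ a $P$-position; Corollary \ref{xi0} (contrapositively) then forces $z' \leq 2F_{max}+|F|$, and Theorem \ref{rowcol} applied to the pair $\{z',z'\}$ gives $x = f(\{z',z'\}) \leq 2z'+|F| \leq 4F_{max}+3|F|$, contradicting the hypothesis $x > y > 4F_{max}+3|F|$. With $y'' \neq z'$, the point $(x, \max(y'',z'))$ lies in $S$ in column $x$ at height strictly below $y$.

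The main obstacle will be proving injectivity. Suppose distinct $(x_1',y), (x_2',y)$ satisfy $\psi(x_1',y) = \psi(x_2',y)$; setting $z_i' = f(\{x_i',y\})$ and $y_i'' = f(\{x, z_i'\})$, uniqueness of $f(\{\cdot,y\})$ forces $z_1' \neq z_2'$, and the equality of multisets $\{x, y_1'', z_1'\} = \{x, y_2'', z_2'\}$ then imposes the ``swap'' $y_1'' = z_2'$ and $y_2'' = z_1'$; equivalently, $\{x, z_1', z_2'\}$ is a $P$-position. I would derive a contradiction by examining $\{y, z_1', z_2'\}$: it is not $P$ (otherwise $f(\{y, z_1'\})$ would equal $z_2' < y$ while in fact it equals $x_1' \geq x > y$), hence $N$ and must have a $P$-child; but each candidate is blocked by uniqueness --- reducing $y$ demands $f(\{z_1', z_2'\}) < y$ while in fact $f(\{z_1', z_2'\}) = x > y$, and reducing $z_i'$ demands $f(\{y, z_j'\}) < z_i'$ while $f(\{y, z_j'\}) = x_j' \geq x > z_i'$. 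The resulting contradiction establishes injectivity, hence $b(x,y) \geq r(x,y)$.
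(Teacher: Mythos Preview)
Your proof is correct and follows essentially the same approach as the paper's: both arguments take each $P$-position $\{x',y,z'\}$ with $x'>x$ and $z'<y$, observe that $\{x,y,z'\}$ is an $N$-position whose only possible $P$-child is of the form $\{x,y'',z'\}$ with $y''<y$, and then prove these $P$-positions are pairwise distinct by analyzing $\{y,z_1',z_2'\}$ in the would-be collision case. Your use of the ``third-heap function'' $f$ is a tidy repackaging of Theorem~\ref{rowcol}'s uniqueness, and your injectivity contradiction (showing $\{y,z_1',z_2'\}$ can be neither $P$ nor $N$) is logically equivalent to the paper's version (showing it has no $P$-child, hence is $P$, contradicting that $\{x,z_1',z_2'\}$ is $P$).
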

\begin{proof}
There is a element of $S$ of the form $(x^\prime,y)$ with $x^\prime>x$ for each $P$-position of the form $\{x^\prime,y,z\}$ with $z<y<x<x^\prime$. No two of these positions can have the same value for $z$, since then they would have two of the three elements in common, so there would be a move from one to the other. There are therefore $r(x,y)$ distinct values of $z$ for which there is a $P$-position of the form $\{x^\prime,y,z\}$ with $z<y<x<x^\prime$. For each of these values of $z$, $\{x,y,z\}$ cannot be a $P$-position, since it has a $P$-position parent. It therefore must have a $P$-position child. This child cannot be of the form $\{x^{\prime\prime},y,z\}$ with $x^{\prime\prime}<x$ because then it would also be a child of $\{x^\prime,y,z\}$. This child cannot be of the form $\{x,y,z^\prime\}$ since then it would satisfy $x>y>z^\prime$, contradicting the fact that $(x,y)\notin S$. 
Therefore, for each of the $r(x,y)$ values of $z$, there is a $P$-position of the form $\{x,y^\prime,z\}$ with $y^\prime<y$. If two of these $P$-positions, $\{x,y^\prime_1,z_1\}$ and $\{x,y^\prime_2,z_2\}$ were the same, then $y^\prime_1=z_2$ and $y^\prime_2=z_1$. Because $z$ is one of our $r(x,y)$ values, we know that there exist $x_1$ and $x_2$ greater than $x$ such that $\{x_1,y,z_1\}$ and $\{x_2,y,z_2\}$ are both $P$-positions. This would mean that $\{y,z_1,z_2\}$ would have three $P$-position parents, $\{x_1,y,z_1\}$, $\{x_2,y,z_2\}$, and $\{x,z_1,z_2\}$. In order to form a $P$-position by changing any of the three elements of $\{y,z_1,z_2\}$, we would have to increase that element. Therefore, we cannot decrease one element of $\{y,z_1,z_2\}$ to form a $P$-position, so $\{y,z_1,z_2\}$ has no $P$-position children, making it a $P$-position, which contradicts the fact that $\{x,z_1,z_2\}$ is also a $P$-position. Therefore, each of the $r(x,y)$ $P$-positions of the form $\{x,y^\prime,z\}$ with $y^\prime<y$ are unique, and each one has one coordinate equal to $x$, and the other two less than $y$. For each of these $P$-positions, $y^\prime$ and $z$ must be distinct, since otherwise, $4F_{max}+3|F|<x\leq y^\prime+z=2z$, so $z<2F_{max}+|F|$, implying that $\{0,z,z\}$ is a $P$-position child of $\{x,y^\prime,z\}$. Therefore, either $(x,y^\prime)$ or $(x,z)$ is in $S$, so each of the positions will contribute to $S$ an ordered pair of the form $(x,y^{\prime\prime})$ with $y^{\prime\prime}<y$, which will contribute $1$ to $b(x,y)$. Therefore, $b(x,y)\geq r(x,y)$.
\end{proof}
\subsection{The Sets $\bm{U_{x,y}}$, $\bm{{\bar{U}_{x,y}}}$, and $\bm{S_n}$}
In this section, we define a sequence $S_n$ of sets. These sets, and the intermediate sets, $U_{x,y}$ and ${\bar{U}_{x,y}}$ which are used to define $S_n$ encode information about $S.$ In particular, the $S_n$ should be thought of as increasingly accurate approximations of $S$ which are defined to be free of holes. The next three lemmas are building up to proving that for and $n<m,$ it is possible to get from $S_n$ to $S_m$ by changing points in a very limited way. This information about how to construct $S_m$ from $S_n$ will be useful for the next section, where we will prove several important properties of the sequence $S_n.$

The first sets we will need to define on our way to $S_n$ are the $U_{x,y}$. The set $U_{x,y}$ is very similar to the set of all points $(x^\prime,y^\prime)$ in $S$ with $y^\prime<x$. In fact, these two sets have the same size. However, some of the points are moved so that $U_{x,y}$ is free of holes.
\begin{defn}\label{defU}
Given any instance $\mathrm{Nim}-F$ of CIS-Nim, and given any $x>4F_{max}+3|F|$ and $y\leq x$:
\begin{description}
 \item 
Let $A_{x,y}$ be the set of all ordered pairs $(x^\prime,y^\prime)$, such that $0\leq y^\prime<x^\prime\leq x$ and $b(x^\prime,x^\prime)\geq x^\prime-y^\prime$. 
\item
Let $B_{x,y}$ be the set of all ordered pairs $(x,y^\prime)$, such that $0\leq y^\prime<y$ and $b(x,y-1)\geq y-y^\prime$.
\item
Let $C_{x,y}$ be the set of all ordered pairs $(x^\prime,y^\prime)$, such that $y\leq y^\prime <x\leq x^\prime$ and $r(x,y^\prime)> x^\prime-x$.
\item
Let $D_{x,y}$ be the set of all ordered pairs $(x^\prime,y^\prime)$, such that $0\leq y^\prime<y\leq x<x^\prime$ and $r(x+1,y^\prime)> x^\prime-(x+1)$. 
\item
Let $U_{x,y}=A_{x,y}\cup B_{x,y}\cup C_{x,y}\cup D_{x,y}$
\end{description}
\end{defn}
The next lemma will describe the map necessary to get from $U_{x,y}$ to $U_{x,y+1}$. This will be extended in the following two lemmas to describe the map necessary to ger from $S_n$ to $S_m$. 
\begin{lem}\label{x-y}
Given any instance $\mathit{Nim}-F$ of CIS-Nim, and any $x>4F_{max}+3|F|$ and $0\leq y<x$, there exists a bijection from $U_{x,y}$ to $U_{x,y+1}$ which either fixes all elements or fixes all but one element and sends $(x,y-b(x,y))$ to $(x+r(x,y),y)$. This bijection will be the identity if and only if $(x,y)\in S$ or $b(x,y)=0$.
\end{lem}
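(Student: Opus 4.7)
The plan is to track how each of the four sets $A_{x,y}$, $B_{x,y}$, $C_{x,y}$, $D_{x,y}$ changes when $y$ is incremented by one, and then to verify that the net change on $U_{x,y} = A_{x,y} \cup B_{x,y} \cup C_{x,y} \cup D_{x,y}$ is exactly the single-element swap claimed. Since the defining condition of $A$ does not mention $y$, the piece $A$ is unchanged. The set $B_{x,y}$ is the interval $\{(x, y') : y - b(x, y-1) \leq y' \leq y - 1\}$, and $B_{x, y+1}$ is the analogous interval with threshold $b(x, y)$; splitting on whether $(x, y) \in S$ (equivalently $b(x,y) = b(x,y-1)+1$ or $b(x,y) = b(x,y-1)$) shows that $B_{x, y+1}$ is either $B_{x, y} \cup \{(x, y)\}$ or $B_{x, y}$ with $(x, y - b(x, y))$ replaced by $(x, y)$ (trivially so when $b(x, y) = 0$). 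A short computation from the defining inequalities yields $C_{x, y} \setminus C_{x, y+1} = \{(x + i, y) : 0 \leq i < r(x, y)\}$ and $D_{x, y+1} \setminus D_{x, y} = \{(x + i, y) : 1 \leq i \leq r(x+1, y)\}$.

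From here I would case-split on $(x, y) \in S$ and on $b(x, y) = 0$. If $(x, y) \in S$ then $r(x+1, y) = r(x, y) - 1$, the range $(x+1, y), \ldots, (x + r(x, y) - 1, y)$ cancels between $C$ (lost) and $D$ (gained), the pair $(x, y)$ is lost from $C$ but regained by $B$, and nothing changes. If $(x, y) \notin S$ with $b(x, y) = 0$, adapting the argument of Lemma \ref{B<L} (whose essential hypothesis is $x > 4F_{max} + 3|F|$, not a lower bound on $y$) also forces $r(x, y) = 0$, so again nothing changes. Otherwise $(x, y) \notin S$ with $b(x, y) \geq 1$, and here $r(x+1, y) = r(x, y)$: the shared range $(x+1, y), \ldots, (x + r(x, y) - 1, y)$ cancels between $C$ and $D$, the pair $(x, y)$ cancels between $C$ (lost) and $B$ (gained), and what remains is precisely the loss of $(x, y - b(x, y))$ from $B$ together with the gain of $(x + r(x, y), y)$ from $D$.

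The main obstacle is to verify that these bookkeeping moves translate into genuine set-theoretic changes to $U$ despite the overlaps between $A$ and $B \cup C$. The two critical checks are (i) $(x, y - b(x, y)) \notin A_{x, y}$, which follows from $(x, x) \notin S$ (since $S$ requires the second coordinate strictly below the first), giving $b(x, x) - b(x, y) \leq x - 1 - y$ and hence strictly violating the $A$-inequality $b(x, x) \geq x - y + b(x, y)$ at this point; and (ii) $(x + r(x, y), y) \notin U_{x, y}$, which follows from $x + r(x, y) > x$ (ruling out $A$ and $B$), from the strict inequality $r(x, y) > x' - x$ failing at $x' = x + r(x, y)$ for $C_{x, y}$, and from $y' = y$ failing $D_{x, y}$'s requirement $y' < y$. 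Given (i) and (ii), the claimed bijection is well-defined with the asserted behavior, and the identity-iff characterization is immediate from the three cases above.
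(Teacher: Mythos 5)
Your decomposition by constituent set ($A$ unchanged, $B$ shifted or extended, $C$ losing and $D$ gaining an interval in row $y$) is the same exhaustive membership check the paper performs, just organized by set rather than by the paper's eleven regions of the plane; the interval computations for $B$, $C$, $D$ and the three-way case split all match, and your parenthetical that Lemma \ref{B<L} really only needs $x>4F_{max}+3|F|$ is correct and is in fact needed to justify the paper's own invocation of that lemma here (its stated hypothesis $y>4F_{max}+3|F|$ is not available). Your check (i) is also a point the paper silently skips, and your argument for it ($b(x,x)-b(x,y)\leq x-1-y$, so $b(x,x)<x-y+b(x,y)$) is right.

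The one loose end is check (ii) in the sub-case $r(x,y)=0$ of your third case. There the point being gained is $(x+r(x,y),y)=(x,y)$ itself, so the justification ``$x+r(x,y)>x$ rules out $A$ and $B$'' does not apply: $B_{x,y}$ is still excluded by $y'<y$, but membership of $(x,y)$ in $A_{x,y}$ would require $b(x,x)\geq x-y$, and the inequality you used in (i) only gives $b(x,x)\leq b(x,y)+x-1-y$, which does not contradict $b(x,x)\geq x-y$ once $b(x,y)\geq 1$. If $(x,y)$ were in $A_{x,y}$ the map would fail to be injective (both $(x,y)$ and $(x,y-b(x,y))$ would land on $(x,y)$). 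The paper's proof has exactly the same elision (Region 9, Case 3 concludes $(x',y')\notin U_{x,y}$ from $(x',y')\notin C_{x,y}$ alone), and the intended resolution appears to be that $A_{x,y}$ should range over $x'<x$ rather than $x'\leq x$ — this strict reading is also what makes the identity $A_{x,x}\cup B_{x,x}=A_{x+1,0}$ in Lemma \ref{SnSm} true, since otherwise $A_{x+1,0}$ acquires column $x+1$ from nowhere. You should either adopt that reading explicitly or supply a separate argument that $b(x,x)<x-y$ in this configuration; as written, this sub-case is not closed.
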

\begin{proof}
We will partition the set of all points $(x,y)$ with $y^\prime<x^\prime$ into 11 regions. We will show that $U_{x,y}$ and $U_{x,y+1}$ agree for most of these regions. We will see that they do not always agree for regions 3 and 9, but we will show that the way the points in which regions 3 and 9 may differ will exactly follow the statement of the Lemma.
\begin{description}
\item[Region 1 ($\bm{x^\prime< x}$):]
$(x^\prime,y^\prime)\in U_{x,y}$ if and only if $(x^\prime,y^\prime)\in A_{x,y}$ if and only if $b(x^\prime,x^\prime)\geq x^\prime-y^\prime$ if and only if $(x^\prime,y^\prime)\in A_{x,y+1}$ if and only if $(x^\prime,y^\prime)\in U_{x,y+1}$.
\item[Region 2 ($\bm{x^\prime=x}$ and $\bm{y^\prime<y-b(x,y)}$):]
Since $b(x,y-1)\leq b(x,y)< y-y^\prime$, we know that $(x^\prime,y^\prime)\notin B_{x,y}$ so $(x^\prime,y^\prime)\notin U_{x,y}$. Similarly, since $b(x,(y+1)-1)\leq y-y^\prime<(y+1)-y^\prime$, we know that $(x^\prime,y^\prime)\notin B_{x,y+1}$ so $(x^\prime,y^\prime)\notin U_{x,y+1}$.
\item[Region 3 ($\bm{x^\prime=x}$ and $\bm{y^\prime=y-b(x,y)<y}$):]
This case must further be divided into two cases:
\begin{description}
\item[Case 1 ($\bm{(x,y)\in S}$):]
This means that $b(x,y)=b(x,y-1)+1$. Therefore, $(x^\prime,y^\prime)\in U_{x,y}$ if and only if $(x^\prime,y^\prime)\in B_{x,y}$ if and only $b(x,y-1)\geq y-y^\prime$ if and only if $b(x,(y+1)-1)\geq (y+1)-y^\prime$ if and only if $(x^\prime,y^\prime)\in B_{x,y+1}$ if and only if $(x^\prime,y^\prime)\in U_{x,y+1}$

\item[Case 2 ($\bm{(x,y)\notin S}$):]
This means that $b(x,y)=b(x,y-1)$. Therefore, $b(x,y-1)=b(x,y)=y-y^\prime$, so $(x^\prime,y^\prime)\in B_{x,y}$, so $(x^\prime,y^\prime)\in U_{x,y}$.
On the other hand, $b(x,y)=y-y^\prime<y+1-y^\prime$, so $(x^\prime,y^\prime)\notin B_{x,y+1}$ and $(x^\prime,y^\prime)\notin U_{x,y+1}$.
\end{description}
\item[Region 4 ($\bm{x^\prime=x}$ and $\bm{y-b(x,y)<y^\prime<y}$):]
Since $b(x,y)> y-y^\prime$, we know that $b(x,y-1)\geq y-y^\prime$, which implies that $(x^\prime,y^\prime) \in B_{x,y}$ so $(x^\prime,y^\prime)\in U_{x,y}$. Similarly, since $b(x,(y+1)-1)> y-y^\prime$ we know that $b(x,(y+1)-1)\geq (y+1)-y^\prime$ which implies that $(x^\prime,y^\prime)\in B_{x,y+1}$ so $(x^\prime,y^\prime)\in U_{x,y+1}$.
\item[Region 5 ($\bm{x^\prime=x<x+r(x,y)}$ and $\bm{y^\prime=y}$):]
Since $r(x,y^\prime)>x^\prime-x$, we know that $(x^\prime,y^\prime)\in C_{x,y}$, so $(x^\prime,y^\prime)\in U_{x,y}$. 
If $(x,y)\in S$, then $b(x,(y+1)-1)\geq 1=(y+1)-y^\prime$. Otherwise, $(x,y)\notin S$, so by Lemma \ref{B<L}, $b(x,(y+1)-1)\geq r(x,y)\geq 1=(y+1)-y^\prime$. Either way, $(x^\prime,y^\prime)\in B_{x,y+1}$, so $(x^\prime,y^\prime)\in U_{x,y+1}$.
\item[Region 6 ($\bm{x^\prime\geq x}$ and $\bm{y<y^\prime}$):]
$(x^\prime,y^\prime)\in U_{x,y}$ if and only if $(x^\prime,y^\prime)\in C_{x,y}$ if and only if $r(x,y^\prime)> x^\prime-x$ if and only if $(x^\prime,y^\prime)\in C_{x,y+1}$ if and only if $(x^\prime,y^\prime)\in U_{x,y+1}$
\item[Region 7 ($\bm{x^\prime> x}$ and $\bm{y^\prime<y}$):]
$(x^\prime,y^\prime)\in U_{x,y}$ if and only if $(x^\prime,y^\prime)\in D_{x,y}$ if and only if $r(x+1,y^\prime)> x^\prime-(x+1)$ if and only if $(x^\prime,y^\prime)\in D_{x,y+1}$ if and only if $(x^\prime,y^\prime)\in U_{x,y+1}$.
\item[Region 8 ($\bm{x<x^\prime<x+r(x,y)}$ and $\bm{y^\prime=y}$):]
Since $r(x,y^\prime)>x^\prime-x$, we know that $(x^\prime,y^\prime)\in C_{x,y}$, so $(x^\prime,y^\prime)\in U_{x,y}$.
Since $r(x+1,y^\prime)\geq r(x+1,y^\prime)-1>x^\prime-(x+1)$, we know that $(x^\prime,y^\prime)\in D_{x,y+1}$, so $(x^\prime,y^\prime)\in U_{x,y+1}$.
\item[Region 9 ($\bm{x^\prime=x+r(x,y)}$ and $\bm{y^\prime=y}$):]
This region must be further divided into three cases:
\begin{description}
 \item[Case 1 ($\bm{(x,y)\in S}$):]
Since $r(x,y^\prime)=r(x,y)=x^\prime-x$, we know that $(x^\prime,y^\prime)\notin C_{x,y}$, so $(x^\prime,y^\prime)\notin U_{x,y}$.
Since $(x,y)\in S$, $r(x,y)>0$, so we know that $x^\prime>x$. Since $(x,y)\in S$, $r(x+1,y^\prime)=r(x,y)-1=x^\prime-(x+1)$, so $(x^\prime,y^\prime)\notin D_{x,y+1}$, so $(x^\prime,y^\prime)\notin U_{x,y+1}$.
\item[Case 2 ($\bm{b(x,y)=0}$):]
Since $r(x,y^\prime)=r(x,y)=x^\prime-x$, we know that $(x^\prime,y^\prime)\notin C_{x,y}$, so $(x^\prime,y^\prime)\notin U_{x,y}$.
By Lemma \ref{B<L}, since $b(x,y)=0$, we know that $r(x,y)=0$, so $x^\prime=x$. Also, $b(x,(y+1)-1)=0<1=(y+1)-y^\prime$, which implies that $(x^\prime,y^\prime)\notin B_{x,y}$, so $(x^\prime,y^\prime)\notin U_{x,y}$.
\item[Case 3 ($\bm{(x,y)\notin S}$ and $\bm{b(x,y)>0}$):]
Since $r(x,y^\prime)=r(x,y)=x^\prime-x$, we know that $(x^\prime,y^\prime)\notin C_{x,y}$, so $(x^\prime,y^\prime)\notin U_{x,y}$.
If $r(x,y)=0$, then $x^\prime=x$, which since $b(x,(y+1)-1)\geq 1=(y+1)-y^\prime$, which implies that $(x^\prime,y^\prime)\in B_{x,y+1}$. Otherwise, $r(x,y)>0$, and $x^\prime>x$. Since $(x,y)\notin S$, $r(x+1,y^\prime)=r(x,y^\prime)>x+r(x,y^\prime)-(x+1)>x^\prime-(x+1)$, which implies that $(x^\prime,y^\prime)\in D_{x,y+1}$. Either way, $(x^\prime,y^\prime)\in U_{x,y+1}$.
\end{description}
\item[Region 10 ($\bm{x+r(x,y)<x^\prime}$ and $\bm{y^\prime=y}$):]
Since $r(x,y^\prime)\leq x^\prime-x$, we know that $(x^\prime,y^\prime)\notin C_{x,y}$, so $(x^\prime,y^\prime)\notin U_{x,y}$. 
Since $r(x+1,y^\prime)\leq r(x,y^\prime)\leq x^\prime-(x+1)$, we know that $(x^\prime,y^\prime)\notin D_{x,y+1}$, so $(x^\prime,y^\prime)\notin U_{x,y+1}$.
\item[Region 11 ($\bm{y^\prime\geq x}$):]
All of $A_{x,y},B_{x,y},C_{x,y},D_{x,y},A_{x,y+1},B_{x,y+1},C_{x,y+1},$ and $D_{x,y+1}$ are defined to not allow any points in this region, so no points in this region are in either $U_{x,y}$ or $U_{x,y+1}$.
\end{description}
Notice that all points are in $U_{x,y}$ if and only if they are in $U_{x,y+1}$, except for those contained in region 3 case 2, and region 9 case 3. Further, if $b(x,y)>1$ and $b(x,y)\notin S$, then both of these cases contain exactly one point, and otherwise they contain no points. Therefore, if $b(x,y)>0$ or $(x,y)\in S$, then $U_{x,y}=U_{x,y+1}$, and the identity map is a bijection from $U_{x,y}$ to $U_{x,y+1}$. Otherwise, $U_{x,y}$ and $U_{x,y+1}$ are identical, except for the fact that $U_{x,y}$ contains $(x,y-b(x,y))$ while $U_{x,y+1}$ contains $(x+r(x,y),y)$. In this case the map which fixes all but one element and sends $(x,y-b(x,y))$ to $(x+r(x,y),y)$ is a bijection from $U_{x,y}$ to $U_{x,y}$.

Therefore, for any $x>4F_{max}+3|F|$ and $0\leq y<x$, there exists a bijection from $U_{x,y}$ to $U_{x,y+1}$ which either fixes all elements or fixes all but one element and sends $(x,y-b(x,y))$ to $(x+r(x,y),y)$. Further, this bijection will be the identity if and only if $(x,y)\in S$ or $b(x,y)=0$.
\end{proof} 
Next, we will need to introduce the sets ${\bar{U}_{x,y}}$ which are an extention of the sets $U_{x,y}$. In fact, ${\bar{U}_{x,y}}$ is defined from ${\bar{U}_{x,y}}$ by adding infinitely many points so that ${\bar{U}_{x,y}}$ satisfies relations similar to those shown to be satisfied for $S$ in Lenna \ref{B+2L}. Finally, the $S_n$ are just the sets of the form ${\bar{U}_{n,0}}$. 
\begin{defn}\label{Uhat} 
Given any instance $\mathrm{Nim}-F$ of CIS-Nim, and any $x>4F_{max}+3|F|$ and $0\leq y<x$, let ${\bar{U}_{x,y}}$ be the unique set of ordered pairs $(x^\prime,y^\prime)$ with the following two properties:
\begin{description}
 \item[Property 1:] For all $(x^\prime,y^\prime)$ with $y^\prime<x$, $(x^\prime,y^\prime)\in{\bar{U}_{x,y}}$ if and only if $(x^\prime,y^\prime)\in U_{x,y}$.
\item[Property 2:] For all $(x^\prime,y^\prime)$ with $y^\prime\geq x$, $(x^\prime,y^\prime)\in{\bar{U}_{x,y}}$ if and only if $y^\prime<x^\prime\leq 2y^\prime$ and $(y^\prime,\left\lfloor \frac{x^\prime}{2}\right\rfloor)\notin {\bar{U}_{x,y}}$.
\end{description}
Let $S_n$ be a sequence of sets of ordered pairs defined by $S_n={\bar{U}_{n,0}}$.
\end{defn}
\begin{rem}\label{uhatrem}
We know ${\bar{U}_{x,y}}$ exists and is unique since this definition comes with a natural way to determine whether or not $(x^\prime,y^\prime)$ is in ${\bar{U}_{x,y}}$ as a function of the set of all points in ${\bar{U}_{x,y}}$ of the form $(x^{\prime\prime},y^{\prime\prime})$ with $x^{\prime\prime}<x^\prime$.
\end{rem}
We will now extend the result of the previous lemma to these infinite sets, ${\bar{U}_{x,y}}$ and ${\bar{U}_{x,y+1}}$. We find that when a single point is moved in the map from $U_{x,y}$ to $U_{x,y+1}$, this will cause infinitely many points to move in the map from ${\bar{U}_{x,y}}$ to ${\bar{U}_{x,y+1}}$. However, we will show that all of the points moved in this way will move in the same general direction. In particular, if the map sends $(x_1,y_1)$ to $(x_2,y_2)$, then $x_1-y_1\geq x_2-y_2$ and $x_1-2y_1\geq x_2-2y_2$. 
\begin{lem}\label{Uhatxy,xy+1}
Given any instance $\mathit{Nim}-F$ of CIS-Nim, and any $x>4F_{max}+3|F|$ and $0\leq y<x$, there exists a bijection $\phi$ from ${\bar{U}_{x,y}}$ to ${\bar{U}_{x,y+1}}$ such that if $\phi(x_1,y_1)=(x_2,y_2)$, then $x_1-y_1\geq x_2-y_2$ and $x_1-2y_1\geq x_2-2y_2$.  
\end{lem}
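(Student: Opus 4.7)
The plan is to extend the base-region bijection from Lemma \ref{x-y} to all of $\bar{U}_{x,y}$ using the recursive structure of Property 2. In the identity case of Lemma \ref{x-y}, $U_{x,y} = U_{x,y+1}$, and Remark \ref{uhatrem} immediately gives $\bar{U}_{x,y} = \bar{U}_{x,y+1}$, so $\phi$ is the identity. Otherwise Lemma \ref{x-y} swaps $p_0 = (x, y - b(x,y))$ with $p_0' = (x + r(x,y), y)$, and $(x,y) \notin S$ combined with Lemma \ref{B<L} gives $b(x,y) \ge r(x,y)$. For this base swap, the differences $(x_1 - y_1) - (x_2 - y_2) = b(x,y) - r(x,y) \ge 0$ and $(x_1 - 2y_1) - (x_2 - 2y_2) = 2b(x,y) - r(x,y) \ge r(x,y) \ge 0$ are exactly the non-negativities the lemma requires.

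The key observation is that the symmetric difference $\Delta := \bar{U}_{x,y} \triangle \bar{U}_{x,y+1}$ inherits Property 2's recursion: for $(x_1, y_1)$ with $y_1 \ge x$ and $y_1 < x_1 \le 2y_1$, membership flips synchronously in both sets, so $(x_1,y_1) \in \Delta$ iff $(y_1, \lfloor x_1/2 \rfloor) \in \Delta$. Iterating, every element of $\Delta$ is reached from either $p_0$ or $p_0'$ by an ascent $(a,b) \mapsto (2b + \epsilon, a)$ indexed by a valid binary path $\epsilon \in \{0,1\}^k$, and the parity of $k$ (together with which base the path starts at) determines whether the descendant lies in the ``Removed'' part $\bar{U}_{x,y} \setminus \bar{U}_{x,y+1}$ or the ``Added'' part $\bar{U}_{x,y+1} \setminus \bar{U}_{x,y}$. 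I define $\phi$ to be the identity off $\Delta$, and on $\Delta$ by ``base swap, path preservation'': for each $(k,\epsilon)$ whose ascent is valid from both bases, pair the $p_0$-descendant at $(k,\epsilon)$ with the $p_0'$-descendant at the same $(k,\epsilon)$. Since the ascent map is linear with square equal to the scalar $2$, the paired descendants' coordinates telescope; a direct calculation gives that the differences $(x_1 - y_1) - (x_2 - y_2)$ and $(x_1 - 2y_1) - (x_2 - 2y_2)$ always equal $2^{\lceil k/2\rceil}$ times one of $b(x,y)-r(x,y)$ or $2b(x,y)-r(x,y)$ (the assignment swapping with the parity of $k$), both of which are non-negative by Lemma \ref{B<L}.

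The main obstacle is the well-definedness of the path-preservation matching: a valid ascent path from $p_0$ need not be valid from $p_0'$, or conversely, since the validity conditions $b < a$ and $a \le 2b$ at each intermediate point depend on the base via the magnitudes of $b(x,y) - r(x,y)$ and $2b(x,y) - r(x,y)$. Handling the residual ``unshared'' paths requires a case analysis in the spirit of the eleven-region partition used in the proof of Lemma \ref{x-y}, rerouting each unshared-path descendant to a compatible partner reached by one or two additional ascent steps along an alternate compatible path; the $2^{\lceil k/2\rceil}$ scaling of the telescoped inequality differences provides enough slack to absorb the extra ascent without losing non-negativity of either inequality. Assembling these local matchings into a consistent global bijection — and tracking edge cases such as chain termination when validity fails — is the technical heart of the argument.
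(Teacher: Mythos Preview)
Your framework is essentially the paper's own: characterise the symmetric difference $\Delta=\bar U_{x,y}\triangle\bar U_{x,y+1}$ via the Property~2 recursion, then pair descendants by ``base swap, path preservation.'' The paper makes this explicit by parametrising the removed points as $(2^n x+k_1,\,2^n(y-b)+k_2)$ and $(2^{n+1}y+k_3,\,2^n(x+r)+k_1)$ and the added points analogously, and then checking the two inequalities by direct computation on these four families.

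The gap is your treatment of ``unshared paths.'' You correctly flag it as the main obstacle, but then gesture at an ad hoc rerouting scheme that you do not carry out. In fact the obstacle does not exist, and the paper's first move in the nontrivial case is to show this: using Theorem~\ref{rowcol2} it proves $x<2(y-b(x,y))$. (Since $(x,y)\notin S$ and $b(x,y)>0$, some $(x,y')\in S$ has $y'\le y-b(x,y)$; the associated $P$-position $\{x,y',z\}$ with $z<y'$ then gives $x\le y'+z<2y'$.) Together with $r(x,y)\le b(x,y)$ from Lemma~\ref{B<L}, this forces every point in all four families to satisfy $y'<x'\le 2y'$, so every ascent path is simultaneously valid from $p_0$ and from $p_0'$. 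Path preservation is therefore already a bijection with no residual cases to reroute. Without this inequality your argument is incomplete; with it, the case analysis you propose is unnecessary.

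A minor point: your closed form for the telescoped differences is not quite right. At $k=1$ the two differences are $2b-r$ and $2(b-r)$, so the scaling factors on the two linear forms $b-r$ and $2b-r$ are not both $2^{\lceil k/2\rceil}$. This does not affect non-negativity, but it is worth recomputing before relying on it.
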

\begin{proof}
Let $\phi^\prime$ be the map constructed in Lemma \ref{x-y}. If $\phi^\prime$ fixes all elements, then $U_{x,y}=U_{x,y+1}$, so ${\bar{U}_{x,y}}={\bar{U}_{x,y+1}}$. In this case, the identity map sends ${\bar{U}_{x,y}}$ to ${\bar{U}_{x,y+1}}$ and clearly satisfies the necessary relations.

Otherwise, $\phi^\prime$ sends $(x,y-b(x,y))$ to $(x+r(x,y),y)$, and $(x,y)\notin S$. Consider the map $\phi$ from ${\bar{U}_{x,y}}$ to ${\bar{U}_{x,y+1}}$ defined so that 
$$(2^nx+k_1,2^n(y-b(x,y))+k_2) \mapsto (2^nx+r(x,y)+k_1,2^ny+k_2)$$ and 
$$(2^{n+1}y+k_3,2^n(x+r(x,y))+k_1) \mapsto (2^{n+1}(y-b(x,y))+k_3,2^nx+k_1)$$
 for all $n\geq0$, $0\leq k_1,k_2<2^n$, and $0\leq k_3<2^{n+1}$. Let $\phi$ fix all other elements of ${\bar{U}_{x,y}}$.

First, notice that in this case, $x< 2(y-b(x,y)-1).$ This is because since $(x,y)\notin S$, there are $b(x,y)$ values of $y^\prime<y$ with $(x,y^\prime)\in S$, so there exists at least one $(x,y^\prime)\in S$ with $y^\prime<y-b(x,y)$. There is therefore a $P$-position of the form $\{x,y^\prime,k\}$ with $k<y^\prime<x$. If it were true that $x\geq 2(y-b(x,y)-1),$ then $x\geq 2y^\prime>y^\prime+k$, which since $x\geq 4F_{max}+3|F|,$ contradicts Theorem \ref{rowcol2}. Therefore, $x< 2(y-b(x,y)-1).$

It is easy to verify that this implies that the ordered pairs, $(x^\prime,y^\prime)$ of the form $(2^nx+k_1,2^n(y-b(x,y))+k_2)$, $(2^nx+r(x,y)+k_1,2^ny+k_2)$, $(2^{n+1}y+k_3,2^n(x+r(x,y))+k_1)$, or $(2^{n+1}(y-b(x,y))+k_3,2^nx+k_1)$ all satisfy $x^\prime\leq 2 y^\prime$.

We want to show that the ordered pairs in ${\bar{U}_{x,y}}$ but not in ${\bar{U}_{x,y+1}}$ are exactly those of the form $(2^nx+k_1,2^n(y-b(x,y))+k_2)$ or $(2^{n+1}y+k_3,2^n(x+r(x,y))+k_1)$, and the ordered pairs in ${\bar{U}_{x,y+1}}$ but not in ${\bar{U}_{x,y}}$ are exactly those of the form $(2^nx+r(x,y)+k_1,2^ny+k_2)$ or $(2^{n+1}(y-b(x,y))+k_3,2^nx+k_1)$. We will show that this is true for all ordered pairs $(x^\prime,y^\prime)$ with $y^\prime<m$ by induction on $m$, and it will follow that it holds for all ordered pairs.
\begin{description}
 \item[Base Case: ($\bm{m=x}$):]
In this case, a point $(x^\prime,y^\prime)$ with $y^\prime<m$ is in ${\bar{U}_{x,y}}$ if and only if it is in $U_{x,y}$, and a point $(x^\prime,y^\prime)$ with $y^\prime<m$ is in ${\bar{U}_{x,y+1}}$ if and only if it is in $U_{x,y+1}$. This means that the only point which is in ${\bar{U}_{x,y}}$ but not ${\bar{U}_{x,y+1}}$ is $(x,y-b(x,y))$, which since $2(y-b(x,y))\geq x=m$ is also the only point $(x^\prime,y^\prime)$ of the form $(2^nx+k_1,2^n(y-b(x,y))+k_2)$ or $(2^{n+1}y+k_3,2^n(x+r(x,y))+k_1)$ with $y<x=m$.

Similarly, the only point which is in ${\bar{U}_{x,y+1}}$ but not ${\bar{U}_{x,y}}$ is $(x+r(x,y),y)$, which since $2y\geq 2(y-b(x,y))\geq x=m$ is also the only point $(x^\prime,y^\prime)$ of the form $(2^nx+r(x,y)+k_1,2^ny+k_2)$ or $(2^{n+1}(y-b(x,y))+k_3,2^nx+k_1)$ with $y<x=m$.
\item[Inductive Hypothesis:]
The ordered pairs of the form $(x^\prime,y^\prime)$ with $y^\prime<m$ in ${\bar{U}_{x,y}}$ but not in ${\bar{U}_{x,y+1}}$ are exactly those of the form $(2^nx+k_1,2^n(y-b(x,y))+k_2)$ or $(2^{n+1}y+k_3,2^n(x+r(x,y))+k_1)$, and the ordered pairs $(x^\prime,y^\prime)$ with $y^\prime<m$ in ${\bar{U}_{x,y+1}}$ but not in ${\bar{U}_{x,y}}$ are exactly those of the form $(2^nx+r(x,y)+k_1,2^ny+k_2)$ or $(2^{n+1}(y-b(x,y))+k_3,2^nx+k_1)$.
\item[Inductive Step:]
Consider some arbitrary ordered pair $(x^\prime,y^\prime)$ with $y^\prime=m$. If $x^\prime\geq 2y^\prime$, then $(x^\prime,y^\prime)$ is in neither ${\bar{U}_{x,y}}$ nor ${\bar{U}_{x,y+1}}$, and $(x^\prime,y^\prime)$ is not of the form $(2^nx+k_1,2^n(y-b(x,y))+k_2)$, $(2^{n+1}y+k_3,2^n(x+r(x,y))+k_1)$, $(2^nx+r(x,y)+k_1,2^ny+k_2)$ or $(2^{n+1}(y-b(x,y))+k_3,2^nx+k_1)$.

Otherwise by definition, we know that $(x^\prime,y^\prime)$ is in ${\bar{U}_{x,y}}$ but not in ${\bar{U}_{x,y+1}}$ if and only if $(y^\prime,\left\lfloor\frac{x^\prime}{2}\right\rfloor)$ is in ${\bar{U}_{x,y}}$ but not in ${\bar{U}_{x,y+1}}$, which, since $\left\lfloor\frac{x^\prime}{2}\right\rfloor<y^\prime$, is true if and only if $(y^\prime,\left\lfloor\frac{x^\prime}{2}\right\rfloor)$ is of the form $(2^nx+r(x,y)+k_1,2^ny+k_2)$ or $(2^{n+1}(y-b(x,y))+k_3,2^nx+k_1)$. Notice that $(y^\prime,\left\lfloor\frac{x^\prime}{2}\right\rfloor)$ is of the form $(2^nx+r(x,y)+k_1,2^ny+k_2)$ if and only if $(x^\prime,y^\prime)$ is of the form $(2^{n+1}y+k_3,2^n(x+r(x,y))+k_1)$, and $(y^\prime,\left\lfloor\frac{x^\prime}{2}\right\rfloor)$ is of the form $(2^{n+1}(y-b(x,y))+k_3,2^nx+k_1)$ if and only if $(x^\prime,y^\prime)$ is of the form $(2^nx+k_1,2^n(y-b(x,y))+k_2)$. Therefore, $(x^\prime,y^\prime)$ is in ${\bar{U}_{x,y}}$ but not in ${\bar{U}_{x,y+1}}$ if and only if it is of the form $(2^nx+k_1,2^n(y-b(x,y))+k_2)$ or $(2^{n+1}y+k_3,2^n(x+r(x,y))+k_1)$.

A similar argument shows that $(x^\prime,y^\prime)$ is in ${\bar{U}_{x,y+1}}$ but not in ${\bar{U}_{x,y}}$ if and only if it is of the form $(2^nx+r(x,y)+k_1,2^ny+k_2)$ or $(2^{n+1}(y-b(x,y))+k_3,2^nx+k_1).$
\end{description}
It is easy to verify that ordered pairs of the form $(2^nx+k_1,2^n(y-b(x,y))+k_2)$, $(2^nx+r(x,y)+k_1,2^ny+k_2)$, $(2^{n+1}y+k_3,2^n(x+r(x,y))+k_1)$, or $(2^{n+1}(y-b(x,y))+k_3,2^nx+k_1)$ are all distinct, which is the last thing we need to see that $\phi$ sends the points in ${\bar{U}_{x,y}}$ but not in ${\bar{U}_{x,y+1}}$ bijectively to the points in ${\bar{U}_{x,y+1}}$ but not in ${\bar{U}_{x,y}}$, and fixes all other points in ${\bar{U}_{x,y}}$, so $\phi$ is a bijection from ${\bar{U}_{x,y}}$ to ${\bar{U}_{x,y+1}}$.

Further, 
$$2^nx+k_1-2^n(y-b(x,y))+k_2\geq 2^nx+r(x,y)+2^ny+k_2$$ and 
$$2^{n+1}y+k_3-(2^n(x+r(x,y))+k_1)\geq 2^{n+1}(y-b(x,y))+k_3-(2^nx+k_1),$$
so if $\phi(x_1,y_1)=(x_2,y_2)$, then $x_1-y_1\geq x_2-y_2$. Similarly, 
$$2^nx+k_1-2(2^n(y-b(x,y))+k_2)\geq 2^nx+r(x,y)+k_1-2(2^ny+k_2)$$ and 
$$2^{n+1}y+k_3-2(2^n(x+r(x,y))+k_1)\geq 2^{n+1}(y-b(x,y))+k_3-2(2^nx+k_1),$$
so if $\phi(x_1,y_1)=(x_2,y_2)$, then $x_1-2y_1\geq x_2-2y_2$.

Therefore, $\phi$ is a bijection from ${\bar{U}_{x,y}}$ to ${\bar{U}_{x,y+1}}$ such that if $\phi(x_1,y_1)=(x_2,y_2)$, then $x_1-y_1\geq x_2-y_2$ and $x_1-2y_1\geq x_2-2y_2$.  
\end{proof}
Finally, we will now show that the same properties relating ${\bar{U}_{x,y}}$ and ${\bar{U}_{x,y+1}}$ will also relate $S_n$ and $S_m.$ We will do this by showing that ${\bar{U}_{x,x}}$ is the same as ${\bar{U}_{x+1,0}}$. This will allow us to nest the maps described in Lemma $\ref{Uhatxy,xy+1}$ to get from ${\bar{U}_{x,0}}$ to ${\bar{U}_{x,x}}={\bar{U}_{x+1,0}}$. We will then be able to nest these maps to get from $S_n={\bar{U}_{n,0}}$ to $S_m={\bar{U}_{m,0}}$.
\begin{lem}\label{SnSm}
Given any instance $\mathit{Nim}-F$ of CIS-Nim, and any $m>n>4F_{max}+3|F|$, there exists a bijection $\phi_{n,m}$ from $S_n$ to $S_m$ such that if $\phi_{n,m}(x_1,y_1)=(x_2,y_2)$, then $x_1-y_1\geq x_2-y_2$ and $x_1-2y_1\geq x_2-2y_2$.
\end{lem}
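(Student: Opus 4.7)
The plan is to construct $\phi_{n,m}$ by chaining together the bijections supplied by Lemma~\ref{Uhatxy,xy+1} and then exploiting the identification ${\bar{U}_{x,x}}={\bar{U}_{x+1,0}}$, which stitches such chains across successive values of the first subscript. The first step, which I expect to be the technical heart of the argument, is to establish this identification for every $x>4F_{max}+3|F|$. I would compare $U_{x,x}$ and $U_{x+1,0}$ on the lower strip $y'<x$ region by region: $C_{x,x}$, $B_{x+1,0}$, and $D_{x+1,0}$ are all empty by definition, so the work reduces to showing $A_{x,x}\cup B_{x,x}\cup D_{x,x}=A_{x+1,0}\cup C_{x+1,0}$. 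For $x'<x$ this is immediate from matching $A$-conditions; for $x'=x$ it follows from the identity $b(x,x)=b(x,x-1)$, since $(x,x)\notin S$; and for $x'>x$ it follows by comparing the $D_{x,x}$- and $C_{x+1,0}$-conditions directly. The subtle subcase is the boundary row $y'=x$, where ${\bar{U}_{x,x}}$ is governed by Property~2 but ${\bar{U}_{x+1,0}}$ is still governed by Property~1; here Lemma~\ref{B+2L} supplies the count needed to reconcile the two formulations. For pairs with $y'>x$ the agreement then propagates upward through Property~2 of Definition~\ref{Uhat}, and the uniqueness asserted in Remark~\ref{uhatrem} pins down equality of the full sets.

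With the identification in hand, repeated application of Lemma~\ref{Uhatxy,xy+1} across $y=0,1,\ldots,x-1$ produces a bijection $\psi_x\colon S_x={\bar{U}_{x,0}}\to{\bar{U}_{x,x}}={\bar{U}_{x+1,0}}=S_{x+1}$. Setting $\phi_{n,m}=\psi_{m-1}\circ\psi_{m-2}\circ\cdots\circ\psi_n$ then gives a bijection $S_n\to S_m$. Each constituent bijection from Lemma~\ref{Uhatxy,xy+1} weakly decreases both $x'-y'$ and $x'-2y'$ in the sense that $\phi(x_1,y_1)=(x_2,y_2)$ implies $x_1-y_1\ge x_2-y_2$ and $x_1-2y_1\ge x_2-2y_2$, and since both inequalities are transitive, the finite composition $\phi_{n,m}$ retains the same monotonicity between any source point and its final image.

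The main obstacle is unambiguously the first step: proving ${\bar{U}_{x,x}}={\bar{U}_{x+1,0}}$ requires careful case analysis of the eight constituent sets $A$, $B$, $C$, $D$ on each side, a delicate treatment of the boundary row $y'=x$ where the two hatted sets use different governing properties, and invocations of Lemmas~\ref{B+2L} and~\ref{B<L} to match the explicit $r$- and $b$-conditions against the recursive structure imposed by Property~2. Once this identification is in place, the remainder of the argument is essentially transitive chaining and carries no further difficulty.
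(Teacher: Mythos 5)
Your proposal is correct and follows essentially the same route as the paper: establish ${\bar{U}_{x,x}}={\bar{U}_{x+1,0}}$ by matching the constituent sets $A,B,C,D$ on the strip $y'<x$, reconcile the boundary row $y'=x$ (Property~1 versus Property~2) via the counting identity of Lemma~\ref{B+2L}, and then compose the bijections of Lemma~\ref{Uhatxy,xy+1}, using transitivity to preserve both inequalities.
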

\begin{proof}
It follows directly from the definitions that $A_{x,x}\cup B_{x,x}=A_{x+1,0}$, and that $B_{x+1,0}$, $C_{x,x}$, and $D_{x+1,0}$ are all empty. Also, the set of all points $(x^\prime,y^\prime)$ in $C_{x+1,0}$ with $y^\prime<x$ is exactly $D_{x,x}$. Therefore, $U_{x,x}$ is the set of all points $(x^\prime,y^\prime)$ in $U_{x+1,0}$ with $y^\prime<x$.

Given a point $(x^\prime,y^\prime)$ with $y^\prime=x$ and $y^\prime<x^\prime\leq 2y^\prime$, this point is in ${\bar{U}_{x+1,0}}$ if and only if it is in $U_{x+1,0}$ if and only if it is in $C_{x+1,0}$ if and only if $r(x,y^\prime)> x^\prime-x$.

On the other hand, this point is in ${\bar{U}_{x,x}}$ if and only if $(y^\prime,\left\lfloor \frac{x^\prime}{2}\right\rfloor)\notin U_{x,x}$ if and only if $(y^\prime,\left\lfloor \frac{x^\prime}{2}\right\rfloor)\notin B_{x,x}$ if and only if $b(x,x-1)< x-\left\lfloor\frac{x^\prime}{2}\right\rfloor$.

From Lemma \ref{B+2L}, we know that $r(x,y^\prime)> x^\prime-x$ if and only if  $x-1-2b(x,x)> x^\prime-x$ if and only if $2b(x,x-1)\leq 2x-x^\prime-2$ if and only if $b(x,x-1)\leq x-1-\left\lfloor\frac{x^\prime}{2}\right\rfloor$ if and only if $b(x,x-1)< x-\left\lfloor\frac{x^\prime}{2}\right\rfloor$. Therefore, the point is in ${\bar{U}_{x,x}}$ if and only if it is in ${\bar{U}_{x+1,0}}$.

Finally, notice that points with $y^\prime\geq x^\prime$ are clearly in neither set, and points with $x^\prime>2y^\prime$ are not in ${\bar{U}_{x,x}}$ by definition, and not in ${\bar{U}_{x+1,0}}$, since otherwise $r(x,x)$ would be greater than $x$, making it impossible for $r(x,x)+2b(x,x)+1$ to equal $x$. Therefore, a point with $y^\prime\leq x$ is in ${\bar{U}_{x+1,0}}$ if and only if it is in ${\bar{U}_{x,x}}$. Therefore, ${\bar{U}_{x,x}}$ satisfies property 1 for ${\bar{U}_{x+1,0}}$, and clearly property 2 for ${\bar{U}_{x,x}}$ is stronger than property 2 for ${\bar{U}_{x+1,0}}$. Therefore, ${\bar{U}_{x,x}}$ satisfies both property 1 and property 2 for ${\bar{U}_{x+1,0}}$, so ${\bar{U}_{x,x}}={\bar{U}_{x+1,0}}$.

We know from Lemma \ref{x-y} that for all $y<x$ there is a bijection from ${\bar{U}_{x,y}}$ to ${\bar{U}_{x,y+1}}$ with all the properties described in Lemma \ref{x-y}. There is therefore a bijection from ${\bar{U}_{x,0}}$ to ${\bar{U}_{x,x}}$ which can be expressed as a product of the bijections described in Lemma \ref{x-y}. Since 
${\bar{U}_{x,x}}={\bar{U}_{x+1,0}}$, this means there is a bijection from $S_x={\bar{U}_{x,0}}$ to $S_{x+1}={\bar{U}_{x+1,0}}$ which can be expressed as a composition of the bijections described in Lemma \ref{Uhatxy,xy+1}. By composing these bijections, we get that for any $n<m$ there is a bijection from $S_n$ to $S_m$ which can be expressed as a composition of the bijections described in Lemma \ref{x-y}.

Clearly, any composition of functions described in Lemma \ref{x-y} also satisfy the same relation. We therefore constructed a bijection $\phi_{n,m}$ such that if $\phi_{n,m}(x_1,y_1)=(x_2,y_2)$, then $x_1-y_1\geq x_2-y_2$ and $x_1-2y_1\geq x_2-2y_2$. 
\end{proof}
\subsection{Properties of $\bm{S_n}$}
In this section we will define two functions, $g(n,m)$ and $h(n,m)$ which will contain information about $S_n$. We will use $g$ as a potential function that will limit how much $h(n,n)$ will be able to change as $n$ increases. This will ultimately allow us to prove that $\displaystyle\lim_{k\rightarrow\infty}\frac{
h({n2^k},n2^k)}{4^k}$ converges. This will be helpful, because $h(n,n)$ will allow us to approximately construct $\pi(n)$ and show that $\displaystyle\lim_{k\rightarrow\infty}\frac{\pi(n2^k)}{(n2^k)^2}$ also converges, and prove that CIS-Nim exhibits a period-two scale invariance.
\begin{defn}\label{defh} Given any instance $\mathrm{Nim}-F$ of CIS-Nim, and positive integers $m> 4F_{max}+3|F|$ and $n$, let $R_n$ denote the set of all $(x,y)$ with $y<n\leq x\leq 2y$, and let $h(m,n)=|R_n\cap S_m|.$
\end{defn}
\begin{lem}\label{hprop}
Given any instance $\mathit{Nim}-F$ of CIS-Nim, for all $n>4F_{max}+3|F|$ and for all non-negative integers $k$, $h(n,n2^k)=4^kh(n,n)$
\end{lem}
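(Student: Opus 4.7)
The plan is to prove this by induction on $k$. The base case $k=0$ is immediate: $h(n,n) = 4^0 \cdot h(n,n)$.

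For the inductive step, I set $N = n 2^k$ and aim to show $h(n, 2N) = 4 h(n, N)$. The essential tool is a scale-invariance of $S_n$ obtained by applying property 2 of Definition \ref{Uhat} twice: for any $y \geq n$ with $y < x \leq 2y$, one finds that $(2x, 2y) \in S_n$ if and only if $(x, y) \in S_n$ (and the boundary case $x = 2y$ always puts both points in $S_n$). This is the single mechanism by which "doubling" interacts cleanly with $S_n$-membership, and it is the engine driving the $4^k$ scaling.

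Using this scale-invariance, I would partition $R_{2N}$ into the diagonal $\{(X, Y) : X = 2Y\}$, which contains exactly $N$ points, all automatically in $S_n$ since $Y \geq N \geq n$, together with the four parity classes of the off-diagonal part, indexed by $(X \bmod 2, Y \bmod 2)$. Each off-diagonal parity class is in bijection with an appropriate portion of $R_N$ (either all of $R_N$, or the subset $R_N \setminus R_N^{=2y}$ of points with $x < 2y$, depending on the parities) via one of the four doubling maps $(x, y) \mapsto (2x + \epsilon_1, 2y + \epsilon_2)$ with $\epsilon_i \in \{0,1\}$. Property 2, applied in each parity case, shows that $S_n$-membership is preserved under these bijections. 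Summing the four parity-class contributions plus the $N$ points on the diagonal, and using that the diagonal subset $R_N^{=2y} = \{(2y,y) : y \in [\lceil N/2 \rceil, N-1]\}$ of $R_N$ is itself entirely in $S_n$ whenever $N \geq 2n$ (so $y \geq n$ and property 2 applies to $(2y,y)$ directly), the counting collapses to $h(n, 2N) = 4 h(n, N)$.

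The hard part will be the bookkeeping at the boundary, especially the step $k = 0 \to 1$ (where $N = n$). In this case $R_n^{=2y}$ lies entirely in the region $y < n$, so $S_n$-membership there is governed by $U_{n,0}$ via property 1 rather than by the clean property-2 recursion, and the easy argument "$(2y,y) \in S_n$ for all $y \geq n$" no longer applies. Pinning down $|R_n^{=2y} \cap S_n|$ precisely enough for the cancellation to go through requires combining the design of $U_{n,0}$ in Definition \ref{defU} (particularly $A_{n,0}$ and $C_{n,0}$) with the identities in Lemma \ref{B+2L} and Lemma \ref{B<L}, and is the place where the hypothesis $n > 4 F_{max} + 3|F|$ is genuinely used. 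Once this boundary computation is in hand, the induction runs to all $k$ and yields $h(n, n 2^k) = 4^k h(n, n)$.
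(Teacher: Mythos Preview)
Your parity decomposition and the paper's map $\psi:R_{n2^k}\to R_{n2^{k-1}}$, $(x,y)\mapsto(\lfloor x/2\rfloor,\lfloor y/2\rfloor)$, are two descriptions of the same correspondence, so the underlying mechanism coincides. The paper runs the inductive step uniformly for all $k\geq 1$, simply asserting that $\psi$ is a well-defined $4$-to-$1$ surjection with $(x,y)\in S_n\Leftrightarrow\psi(x,y)\in S_n$; it does not single out $k=1$ or treat the diagonal separately.

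Your caution about the diagonal $x=2y$ is in fact warranted---the paper's assertions about $\psi$ are not literally correct there---but your expectation that the $k=0\to 1$ bookkeeping can be closed via Lemmas~\ref{B+2L} and~\ref{B<L} is misplaced. Carrying out your count yields
\[
h(n,2n)=4h(n,n)+n-2\,\bigl|R_n^{=2y}\cap S_n\bigr|,
\]
and since $|R_n^{=2y}|=\lfloor n/2\rfloor$, the needed equality $|R_n^{=2y}\cap S_n|=n/2$ is impossible for odd $n$. Concretely, for $F=\emptyset$ and $n=3$ one finds $U_{3,0}=\{(3,2)\}$, whence $h(3,3)=1$ while direct computation gives $h(3,6)=7\neq 4$. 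So the statement itself fails at $k=1$ for odd $n$; neither the paper's brisk argument nor your proposed repair can establish it there. For $k\geq 2$ your argument (equivalently the paper's, once the diagonal cancellation is made explicit) goes through cleanly, since then $N=n2^{k-1}$ is even and every $(2y,y)\in R_N^{=2y}$ has $y\geq n$, putting it in $S_n$ by Property~2. That range is all the downstream applications actually require.
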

\begin{proof}
We will proceed by induction on $k$.
\begin{description}
 \item [Base Case ($\bm{k=0}$):]
$h(n,n2^k)=h(n,n)=4^kh(n,n)$
\item[Inductive Hypothesis:]$h(n,n2^{k-1})=4^{k-1}h(n,n)$
\item[Inductive Step:]
Consider the map $\psi:R_{n2^k}\rightarrow R_{n2^{k-1}}$ defined by $(x,y)\mapsto (\left\lfloor\frac{x}{2}\right\rfloor,\left\lfloor\frac{y}{2}\right\rfloor)$. Notice that $y<n2^k\leq x\leq 2y$ if and only if $\left\lfloor\frac{x}{y}\right\rfloor<n2^{k-1}\leq \left\lfloor\frac{x}{2}\right\rfloor\leq 2\left\lfloor\frac{y}{2}\right\rfloor$, so this map is well defined and surjective. Further, if $(x,y)\in R_{n2^k}$, then $y<n2^k\leq x\leq 2y$, so by the definition of $S_n$, we know that 
$$(x,y)\in S_n\Leftrightarrow(y,\left\lfloor\frac{x}{2}\right\rfloor)\notin S_n\Leftrightarrow(\left\lfloor\frac{x}{2}\right\rfloor,\left\lfloor\frac{y}{2}\right\rfloor)\in S_n.$$ Therefore, for any $(x,y)\in R_{n2^k}$ we get that $(x,y)\in S_n$ if and only if $\psi(x,y)\in S_n$. Therefore, $\psi$ maps $R_{n2^k}\cap S_n$ onto $R_{n2^{k-1}}\cap S_n$. Further, there are exactly four points, $(2x,2y)$, $(2x+1,2y)$, $(2x,2y+1)$, and $(2x+1,2y+1),$ which map to the point $(x,y)$. Therefore, $\psi$ maps four points in $R_{n2^k}\cap S_n$ onto each point in $R_{n2^k}\cap S_n$, so $|R_{n2^k}\cap S_n|=4|R_{n2^{k-1}}\cap S_n|$. Therefore, $h(n,n2^{k})=4h(n,n2^{k-1})=4(4^{k-1}h(n,n))=4^{k}h(n,n).$
\end{description}
\end{proof}
We have shown that $h(n,n2^k)=4^kh(n,n)$. We would like to relate $h(n2^k,n2^k)$ to $h(n,n2^k)$, which will allow us to relate $h(n2^k,n2^k)$ to $h(n,n)$. To do this, we will have to limit how much $h(m,n2^k)$ can change as $m$ changes from $n$ to $n2^k$. We will need $g(m,n)$, which will serve as a potential function limiting how much $h(m,n)$ can change as we repeatedly double $m$.
\begin{defn}\label{defgh} Given any instance $\mathrm{Nim}-F$ of CIS-Nim, and positive integers $m> 4F_{max}+3|F|$ and $n$, let $T_n$ be set of ordered pairs of the form $(x,y)$, with $2y-x\leq n$. Let $f((x,y),n)=n+2x-3y+2$, and let $g(m,n)$ denote the sum over all pairs $(x,y)\in T_n\cap S_m$ of $f((x,y),n)$.
\end{defn}
\begin{defn}\label{wellb}
Given any instance $\mathrm{Nim}-F$ of CIS-Nim, if $r(x,x)<x-1$ for all but finitely many $x$, we will say the game is ``well behaved.'' In this case, let $c_1$ be the least natural number, such that $c_1\geq4F_{max}+3|F|$ and $r(x,x)<x-1$ for all $x>c_1$.
\end{defn}
It turns out that all the interesting games of CIS-Nim are well behaved. However, some special cases such as Nim are not well behaved. Games which are not well behaved are much easier to analyze. We will continue our analysis in Lemmas \ref{gprop} and \ref{limith} only considering well behaved games. Then, in Lemma \ref{degenerate}, we will show that the result of \ref{limith} also holds for games which are not well behaved.

We are going to use $g$ as a potential function to limit how much $h$ will be able to change. The following lemma will provide an upper bound for $g({c_1},2^k)$ in terms of $k$, which will give us our initial finite potential.
\begin{lem}\label{gprop}
Given any well behaved instance $\mathit{Nim}-F$ of CIS-Nim, there exists a constant $c_2$ such that for all $k$, $\frac{g({c_1},2^k)}{8^k}\leq c_2$
\end{lem}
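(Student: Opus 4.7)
The plan is induction on $k$, exploiting the $4$-to-$1$ self-similarity of $S_{c_1}$ under $\psi(x,y)=(\lfloor x/2\rfloor,\lfloor y/2\rfloor)$. This is the same self-similarity that drove Lemma~\ref{hprop}: applying Property 2 of Definition~\ref{Uhat} twice gives $(x,y)\in S_{c_1}\Leftrightarrow\psi(x,y)\in S_{c_1}$ for $y$ large enough, and the four preimages of any $(x',y')$ under $\psi$ are $(2x'+\epsilon_1,2y'+\epsilon_2)$ with $\epsilon_1,\epsilon_2\in\{0,1\}$. First I would verify that $\psi$ sends $T_{2^k}\cap S_{c_1}$ into $T_{2^{k-1}}\cap S_{c_1}$: a brief parity case analysis shows that if $d=2y-x$, then $d'=2\lfloor y/2\rfloor-\lfloor x/2\rfloor$ lies in $\{(d-2)/2,(d-1)/2,d/2,(d+1)/2\}$, so $d\leq 2^k$ forces $d'\leq 2^{k-1}$.

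The central algebraic step is the identity
\[
\sum_{\epsilon_1,\epsilon_2\in\{0,1\}}f\bigl((2x'+\epsilon_1,\,2y'+\epsilon_2),\,2^k\bigr)\;=\;8\,f\bigl((x',y'),\,2^{k-1}\bigr)-10,
\]
which is a direct expansion of $f(\cdot,n)=n+2x-3y+2$. Summing this identity over $(x',y')\in T_{2^{k-1}}\cap S_{c_1}$, and using that each $(x,y)\in T_{2^k}\cap S_{c_1}$ with $y\geq 2c_1$ contributes $f((x,y),2^k)$ to exactly one of the four preimage terms on the left, yields
\[
g(c_1,2^k)\;\leq\;8\,g(c_1,2^{k-1})\;-\;10\,|T_{2^{k-1}}\cap S_{c_1}|\;+\;E_k,
\]
where $E_k$ collects contributions from the bounded set of ``small-$y$'' points (those with $y<2c_1$) where the $\psi$-correspondence does not apply directly.

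The main obstacle is controlling $E_k$. Because the number of points of $S_{c_1}$ with $y<2c_1$ is bounded by a constant depending only on $c_1$ and $|F|$, and each such point contributes $f((x,y),2^k)=2^k+O(1)$, we get $E_k=O(2^k)$. Since $-10\,|T_{2^{k-1}}\cap S_{c_1}|$ is nonpositive, the recurrence simplifies to $g(c_1,2^k)\leq 8\,g(c_1,2^{k-1})+C\cdot 2^k$ for a constant $C$ depending only on the game. Dividing by $8^k$ gives $g(c_1,2^k)/8^k\leq g(c_1,2^{k-1})/8^{k-1}+C\cdot 4^{-k}$, and telescoping from a fixed base case $k_0$ yields $g(c_1,2^k)/8^k\leq g(c_1,2^{k_0})/8^{k_0}+C\sum_{j>k_0}4^{-j}$, a finite constant. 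Taking $c_2$ to be this bound completes the proof.
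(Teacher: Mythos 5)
Your proposal is correct and follows essentially the same route as the paper: the same map $\psi(x,y)=(\lfloor x/2\rfloor,\lfloor y/2\rfloor)$ with its four-to-one self-similarity on $S_{c_1}$, the same recurrence $g(c_1,2^k)\leq 8\,g(c_1,2^{k-1})+O(2^k)$ with the $O(2^k)$ term coming from a bounded exceptional set of small points, and the same division by $8^k$ followed by telescoping against a convergent geometric series. The only cosmetic difference is that you use the exact identity $\sum_{\epsilon_1,\epsilon_2}f((2x'+\epsilon_1,2y'+\epsilon_2),2^k)=8f((x',y'),2^{k-1})-10$ where the paper uses the pointwise bound $f((x,y),2^k)\leq 2f(\psi(x,y),2^{k-1})$ together with the at-most-four-preimages count; your version additionally requires the (easily checked) fact that $f(\cdot,2^k)\geq 0$ on the discarded phantom preimages, which is the same nonnegativity the paper invokes elsewhere.
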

\begin{proof}
Consider the map $\psi:T_{2^k}\rightarrow T_{2^{k-1}}$ defined by $(x,y)\mapsto (\left\lfloor\frac{x}{2}\right\rfloor,\left\lfloor\frac{y}{2}\right\rfloor)$. 
Given any point, $(x,y)\in T_{2^k}$, we know that 
$$2\left\lfloor\frac{y}{2}\right\rfloor-\left\lfloor\frac{x}{2}\right\rfloor\leq2\frac{y}{2}+\left\lceil\frac{-x}{2}\right\rceil=\left\lceil\frac{2y-x}{2}\right\rceil\leq\left\lceil \frac{2^k}{2}\right\rceil=2^{k-1}.$$ Therefore, $\psi(x,y)\in T_{2^{k-1}}$.

Similarly, from the definition of $S_{c_1}$, we know that for any point $(x,y)\in S_{c_1}$ with $\left\lfloor\frac{x}{2}\right\rfloor>c_1$, $(y,\left\lfloor\frac{x}{2}\right\rfloor)\notin S_{c_1}$, so $(\left\lfloor\frac{x}{2}\right\rfloor,\left\lfloor\frac{y}{2}\right\rfloor)\in S_{c_1}$. Therefore, $\psi(x,y)\in S_{c_1}$.

For every point $(x,y)\in T_{2^k}$, 
$$f((x,y),2^k)=2^k+2x-3y+2\leq 2(2^{k-1})+4\left\lfloor\frac{x}{2}\right\rfloor-6\left\lfloor\frac{y}{2}\right\rfloor+4=2f(\psi(x,y),2^{k-1}).$$
Combining these three facts, we get that $\psi$ maps each point in $T_{2^k}\cap S_{c_1}$ with $\left\lfloor\frac{x}{2}\right\rfloor>c_1$ to a point in $T_{2^{k-1}}\cap S_{c_1}$. This map is clearly sends at most four points to any given point, and $f((x,y),2^k)\leq 2f(\psi(x,y),2^{k-1})$. Therefore, the sum over all elements $(x,y)$ in $T_{2^k}\cap S_{c_1}$ with $\left\lfloor\frac{x}{2}\right\rfloor>c_1$ of $f((x,y),2^k)$ is at most four times the sum over all elements $(x,y)\in T_{2^{k-1}}\cap S_{c_1}$ of
$2f((x,y),2^{k-1})$. This value equals $8g({c_1},2^{k-1})$.

There are at most $2c_1+2$ values of $x$ with $\left\lfloor\frac{x}{2}\right\rfloor\leq c_1$, and for each of these values, at most $2c_1+2$ values of $y$ with $y<x$. Therefore, there are at most $(2c_1+2)^2$ points $(x,y)\in T_{2^k}\cap S_{c_1}$ with $\left\lfloor\frac{x}{2}\right\rfloor\leq c_1$. For each of these points, $f((x,y),2^k)\leq 2^k+2(2c_1+2)+2$. Therefore, the sum over all elements $(x,y)$ in $T_{2^k}\cap S_{c_1}$ with $\left\lfloor\frac{x}{2}\right\rfloor\leq c_1$ of $f((x,y),2^k)$ is at most $(2^k+2(2c_1+2)+2)(2c_1+2)^2$. Combining this with the last result gives us that the sum over all elements $(x,y)$ in $T_{2^k}\cap S_{c_1}$ of $f((x,y),2^k)$ is at most $8g({c_1},2^{k-1})+(2^k+4c_1+6)(2c_1+2)^2$.

Therefore, $g({c_1},2^{k})\leq 8g({c_1},2^{k-1})+(2^k+4c_1+6)(2c_1+2)^2$, which implies that 
$$\frac{g({c_1},2^{k})}{8^k}\leq \frac{g({c_1},2^{k-1})}{8^{k-1}}+\frac{(2^k+4c_1+6)(2c_1+2)^2}{8^{k-1}}.$$ Therefore, $$\frac{g({c_1},2^{k})}{8^k}\leq g({c_1},1)+\sum_{i=1}^\infty \frac{(2^i+4c_1+6)(2c_1+2)^2}{8^{i-1}}.$$
Because the game is well behaved, $g({c_1},1)$ is a finite constant. We also know $\displaystyle \sum_{i=1}^\infty \frac{(2^i+4c_1+6)(2c_1+2)^2}{8^{i-1}}=\sum_{i=1}^\infty \frac{(8)(2c_1+2)^2}{4^{i}}+\frac{8(4c_1+6)(2c_1+2)^2}{8^{i}}$ is the sum of two geometric series with ratio less than one, and therefore converges to a finite constant. Therefore, $\frac{g({c_1},2^{k})}{8^k}$ is bounded above by some finite constant $c_2$.
\end{proof}
The following lemma is a key part of proving the period-two scale invariance. After this result, all that will remain are a few technical details. We will prove that $\displaystyle\lim_{m\rightarrow\infty}\frac{h({n2^m},n2^m)}{4^m}$ converges, which will later be modified to a similar statement about $\pi$. The general strategy is to assume by way of contradiction that it does not converge, and therefore must contain infinitely many points above and below a interval of positive length. This means that $\frac{h({n2^m},n2^m)}{4^m}$ must increase and decrease by a fixed amount infinitely many times. We will use $g$ as a potential function, and show that as $h$ changes, $g$ must decrease by some fixed amount. This means that $g$ must decrease infinitely, but $g$ will start at a finite potential, and will remain nonnegative, causing a contradiction.
\begin{lem}\label{limith}
Given any well behaved instance $\mathit{Nim}-F$ of CIS-Nim, for any positive integer $n$, the limit  $\displaystyle\lim_{m\rightarrow\infty}\frac{h({n2^m},n2^m)}{4^m}$ converges.
\end{lem}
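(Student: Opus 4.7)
The plan is a proof by contradiction. Let $m_0$ be the smallest nonnegative integer with $n2^{m_0}>4F_{max}+3|F|$, and set $a_m := h(n2^m, n2^m)/4^m$ for $m\geq m_0$. Suppose $a_m$ does not converge. Since $a_m\geq 0$, we have $\liminf_m a_m<\limsup_m a_m$, so I can fix $\alpha<\beta$ strictly between them. Then infinitely many $m$ satisfy $a_m<\alpha$ and infinitely many satisfy $a_m>\beta$, so $(a_m)$ crosses the interval $[\alpha,\beta]$ infinitely often, forcing $\sum_k|a_{k+1}-a_k|=\infty$. I will use $g$ as a nonnegative potential and show that such divergence of the total variation is impossible.

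The central observation is the monotonicity supplied by the bijections. Lemma~\ref{SnSm}, built from the atomic bijections of Lemma~\ref{Uhatxy,xy+1}, produces for each $m'<m''$ a bijection $\phi_{m',m''}\colon S_{m'}\to S_{m''}$ sending $(x_1,y_1)\mapsto(x_2,y_2)$ with $x_1-y_1\geq x_2-y_2$ and $x_1-2y_1\geq x_2-2y_2$. Writing $u:=x-y$ and $v:=2y-x$, this says $u$ is non-increasing and $v$ is non-decreasing along $\phi$, so $f((x,y),N)=N+u-v+2$ is also non-increasing for every $N$. Consequently $g(\cdot,N)$ is non-increasing in its first argument; every point actually moved by a non-identity atomic swap decreases its $f$-value by a positive integer; and any point that enters or leaves $R_N\subseteq T_N$ must itself be moved. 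Pairing enterers with leavers yields the key inequality
\[|h(m'',N)-h(m',N)|\leq g(m',N)-g(m'',N).\]

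I then translate this into a bound on the total variation of $(a_m)$. By Lemma~\ref{hprop}, $a_m=h(n2^m,n2^M)/4^M$ for any $M\geq m\geq m_0$, so for $k\in[m_0,M-1]$,
\[a_{k+1}-a_k=\frac{h(n2^{k+1},n2^M)-h(n2^k,n2^M)}{4^M},\]
and telescoping the displayed inequality gives
\[\sum_{k=m_0}^{M-1}|a_{k+1}-a_k|\leq \frac{g(n2^{m_0},n2^M)}{4^M}.\]
Since each crossing of $[\alpha,\beta]$ contributes at least $\beta-\alpha$ to the left side, infinitely many crossings would force this right-hand side, for arbitrarily large $M$, to grow without bound.

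The main obstacle is that Lemma~\ref{gprop} only gives $g(c_1,2^k)\leq c_2\cdot 8^k$, so the crude bound $g(n2^{m_0},n2^M)/4^M$ grows like $2^M$ and is not a priori uniformly bounded in $M$. Closing the argument will require refining the accounting. Each atomic cascade inside $\phi_{n2^k,n2^{k+1}}$ acts at doubled scales $2^j$ of a base coordinate pair, so such a cascade contributes to $h(\cdot,n2^M)$ only at cascade levels $j\approx M-k$, producing an $h$-change of order $4^{M-k}$ and a $g$-decrease of order $8^{M-k}$. Normalizing $g$ by $8^M$ instead of $4^M$ makes the initial potential $g(n2^{m_0},n2^M)/8^M$ uniformly bounded via Lemma~\ref{gprop}, while each crossing of $[\alpha,\beta]$ still consumes a fixed positive amount of this normalized potential. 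Infinitely many crossings then drain the bounded potential below zero, delivering the required contradiction.
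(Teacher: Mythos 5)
Your setup matches the paper's: argue by contradiction, use $g$ as a nonnegative potential that only decreases along the bijections $\phi_{n,m}$ of Lemma~\ref{SnSm}, and derive a contradiction from infinitely many oscillations. Your ``key inequality'' $|h(m'',N)-h(m',N)|\leq g(m',N)-g(m'',N)$ is correct (every point that enters or leaves $R_N$ corresponds to a genuinely moved point whose $f$-value drops by at least $1$, and $g(\cdot,N)$ is non-increasing), and you correctly diagnose that it is off by a factor of $2^M$ from what is needed. The problem is that everything after ``Closing the argument will require refining the accounting'' is an unproven assertion, and the assertion you need --- that each crossing of $[\alpha,\beta]$ consumes a fixed positive amount of $g/8^M$ --- is precisely the heart of the lemma. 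Your proposed justification via ``cascade levels'' does not deliver it: a cascade originating from an atomic swap at base scale $n2^{k'}$ does produce roughly $4^{M-k'}$ moved points near scale $n2^M$, each losing roughly $2^{M-k'}$ in $f$, but the gain factor $2^{M-k'}$ degenerates to $O(1)$ for swaps whose base scale is comparable to $M$. Summing your per-transition bound $|a_{k+1}-a_k|\lesssim 2^{k-M}\bigl(g(n2^k,n2^M)-g(n2^{k+1},n2^M)\bigr)/4^M$ over $k$ up to $M$ still leaves a total-variation bound of order $2^M$, so the normalized potential is not drained by a fixed amount per crossing and no contradiction results.

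The paper closes this gap with a different, essentially geometric counting argument that your proposal is missing. Fix a crossing, i.e.\ indices $m'<m''$ with $a_{m'}>r$ and $a_{m''}<q$, and evaluate at scale $N=n2^k$ with $k\geq m''$; then at least $(r-q)4^k$ points of $S_{n2^{m'}}\cap R_N$ are mapped outside $R_N$. For any $d$, at most $2dn2^k$ of these can start within distance $d$ of the exit boundary of $R_N$ (i.e.\ satisfy $x<N+d$ or $y\geq N-d$); every other exiting point must have its $x$-coordinate drop by more than $d$ or its $y$-coordinate rise by at least $d$, and the two monotonicity constraints $x_1-y_1\geq x_2-y_2$ and $x_1-2y_1\geq x_2-2y_2$ then force $f$ to drop by at least $d/2$. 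Choosing $d=\tfrac{2^k(r-q)}{4n}$ balances the two counts and yields a potential drop of at least $\tfrac{8^k(r-q)^2}{16n}$ per crossing --- the order-$8^k$ decrease you wanted --- after which Lemma~\ref{gprop} bounds the initial potential by $O(8^k)$ and caps the number of crossings. Without this boundary-layer count and the quantitative ``distance moved implies $f$-decrease'' step, your argument does not reach a contradiction, so as written the proof is incomplete.
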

\begin{proof}
Let $\zeta_m=\frac{h(n2^m,n2^m)}{4^m}$ and assume for the purpose of contradiction that $\zeta_m$ does not converge. For every point $(x,y)\in R_{n2^m}$, we know that $y<n2^m$, and $x<2y<n2^{m+1}$. There are only $2(n2^{m+1})^2$, such points, so $h(n2^m,n2^m)$ is bounded above by $2n^24^{m+1}$, and $\zeta_m$ is bounded above by $8n^2$ Therefore, $\zeta_m$ is a sequence in the compact interval $[0,8n^2]$, which does not converge. There therefore exists some real numbers $q<r$, such that infinitely many terms of $\zeta_m$ are less than $q$ and infinitely many terms of $\zeta_m$ are greater than $r$. There therefore exists some subsequence $\zeta_{s_m}$ such that $\zeta_{s_m}$ is less than $q$ for all odd $m$ and greater than $r$ for all even $m$. Fix some even $m$, and for convenience of notation let $t(m)=n2^{s_m}.$
From Lemma \ref{hprop} we know that for any $k\geq s_{m+1}$, 
$$h({t(m)},n2^k)=h({t(m)},t(m))4^{k-s_m}>r4^k,$$ and $$h({t(m+1)},n2^k)=h({t(m+1)},t(m+1))4^{k-s_{m+1}}<q4^k.$$
We know that $|S_{t(m)}\cap R_{n2^k}|> r4^k$ and $|S_{t(m+1)}\cap R_{n2^k}|<q4^k$ Therefore, there are at least $4^k(r-q)$ points $(x,y)\in S_{t(m)}$, such that $(x,y)\in R_{n2^k}$, but $\phi_{t(m),t(m+1)}(x,y)\notin R_{n2^k}$. 
Given any constant, $d$, at most $dn2^k$ of these $4^k(r-q)$ points can satisfy the relation, $x<n2^k+d$, and at most $dn2^k$ of them can satisfy the relation $y\geq n2^k-d$. Now notice that if $(x_1,y_1)$ is one of the remaining $4^k(r-q)-2dn2^k$ points, and $\phi_{t(m),t(m)}(x_1,y_1)=(x_2,y_2)$, then either $x_2\leq n2^k<x_1-d$ or $y_2>n2^k\geq y_1+d$.

If $x_2<x_1-d$, then since we also know that $x_1-2y_1\geq x_2-2y_2$ algebra shows that $(2x_1-3y_1)-(2x_2-3y_2)\geq \frac{d}{2}$. On the other hand, if $y_2\geq y_1+d$, then since we also know that $x_1-y_1\geq x_2-y_2$ algebra shows that $(2x_1-3y_1)-(2x_2-3y_2)\geq d$. Either way, $(n2^k+2x_1-3y_1)-(n2^k+2x_2-3y_2)\geq \frac{d}{2}$, so $f((x_2,y_2),n2^k)\leq f((x_1,y_1),n2^k)-\frac{d}{2}$. Therefore, for at least $4^k(r-q)-2dn2^k$ points, $(x,y)$, we have the relation $f(\phi_{t(m),t(m+1)}(x,y),n2^k)\leq f((x,y),n2^k)-\frac{d}{2}$. Further, for all of these points, we have know that $y\leq n2^k-d$ and $x\geq n2^k+d$, so $f((x,y),n2^k)\geq n2^k+2(n2^k+d)-3(n2^k-d)=5d\geq \frac{d}{2}$. Each of these $4^k(r-q)-2dn2^k$ contribute to $g({t(m)},n2^k)$, and whether or not they contribute to $g({t(m+1)},n2^k)$, the contribution for each point is reduced by at least $\frac{d}{2}$. It is easy to see that $f(\phi_{t(m),t(m+1)}(x,y),n2^k)\leq f((x,y),n2^k)$ for all $(x,y)$, so every point which contributes to $g({t(m+1)},n2^k)$ will contribute at least as much to $g({t(m)},n2^k)$. It is also easy to see that no point can contribute negatively to $g({t(m)},n2^k)$. Therefore, the contribution of at least $4^k(r-q)-2dn2^k$ decreases by at least $\frac{d}{2}$ and no point increases its contribution, so we know that
$$g({t(m+1)},n2^k)\leq g({t(m)},n2^k)-\frac{d}{2}(4^k(r-q)-2dn2^k).$$
In particular, if we let $d=\frac{2^k(r-q)}{4n}$, we get that
$$g({t(m+1)},n2^k)\leq g({t(m)},n2^k)-\frac{8^k(r-q)^2}{16n}.$$
It is easy to see that $f(\phi_{t(m+1),t(m+2)}(x,y),n2^k)\leq f((x,y),n2^k)$ for all $(x,y)$, so every point which contributes to $g({t(m+2)},n2^k)$ will contribute at least as much to $g({t(m+1)},n2^k)$. It is also easy to see that no point can contribute negatively to $g({t(m+1)},n2^k)$. Therefore, $g({t(m+2)},n2^k)\leq g({t(m+1)},n2^k)$, and since $m$ was an arbitrary even integer, we can nest this relation multiple times, to get that for any $m$ and any $k>s_m$
$$g({t(m)},n2^k)\leq g({t(0)},n2^k)-m\frac{8^k(r-q)^2}{16n}.$$
We know that $g({t(m)},n2^k)\geq 0$, and that 
$$g({t(0)},n2^k)\leq g(c_1,n2^k)\leq g(c_1,2^{k+\left\lceil\log_2(n)\right\rceil})\leq g({c_1},1) 8^{k+\left\lceil\log_2(n)\right\rceil}.$$
Combining these three relations, we get that 
$$m\frac{8^k(r-q)^2}{16n}\leq 8^{k+\left\lceil\log_2(n)\right\rceil}c_2\leq  8^{k+\log_2(n)+1}c_2=8^{k+1}n^3c_2.$$
Now, finally, if we choose $m$ such that $m>\frac{128n^4c_2}{(r-q)^2}$, we get that 
$$8^{k+1}n^3c_2=\frac{128n^4c_2}{(r-q)^2}\frac{8^k(r-q)^2}{16n}<m\frac{8^k(r-q)^2}{16n}\leq 8^{k+1}n^3c_2.$$
This is a contradiction, implying that $\zeta_m$ does converge.
\end{proof}
Now, we will show that the result we just we can reach the same conclusion we just reached for well behaved games in games which are not well behaved.
\begin{lem}\label{degenerate}
Given any instance $\mathit{Nim}-F$ of CIS-Nim which is not well behaved, for any positive integer $n$, the limit $\displaystyle\lim_{k\rightarrow\infty}\frac{h({n2^k},n2^k)}{4^k}$ converges.
\end{lem}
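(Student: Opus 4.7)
The plan is to exploit the structural consequence of not being well behaved. By Lemma~\ref{B+2L}, the identity $r(x,x) = x - 1 - 2b(x,x)$ shows that a game fails to be well behaved precisely when $b(x,x) = 0$ for infinitely many $x > 4F_{max} + 3|F|$; let $X$ denote this infinite set. My first step is to establish that $h(x,x) = 0$ for every $x \in X$ with $x > 8F_{max} + 6|F|$. Unpacking $R_x \cap S_x = R_x \cap U_{x,0}$ (valid since every $(x',y) \in R_x$ has $y < x$, so Property 1 of $\bar{U}_{x,0}$ applies), only the pieces $A_{x,0}$ and $C_{x,0}$ of $U_{x,0}$ can contribute, since $B_{x,0}$ and $D_{x,0}$ are empty by definition. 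The intersection of $A_{x,0}$ with $R_x$ forces $x' = x$ and demands $b(x,x) \geq x - y \geq 1$, impossible. The intersection of $C_{x,0}$ with $R_x$ requires $r(x,y) \geq 1$ for some $y \geq x/2 > 4F_{max} + 3|F|$; but Lemma~\ref{B<L} combined with $(x,y) \notin S$ (forced by $b(x,x)=0$) and $b(x,y) \leq b(x,x) = 0$ gives $r(x,y) = 0$, a contradiction. Hence $h(x,x) = 0$, and Lemma~\ref{hprop} propagates this to $h(x, x 2^j) = 0$ for every $j \geq 0$.

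The second step converts these anchors into a convergence statement for $\zeta_K = h(n 2^K, n 2^K)/4^K$. Since $X$ is infinite I pick $x \in X$ arbitrarily large and apply the bijection $\phi_{x, n 2^K}$ from Lemma~\ref{SnSm}. Its monotonicity properties $x_1 - y_1 \geq x_2 - y_2$ and $x_1 - 2y_1 \geq x_2 - 2y_2$ sharply confine how far points can move, and the anchor identities from Step~1 eliminate large families of potential preimages. For ordinary Nim this comes together directly: Bouton's theorem yields the explicit formula $h(m,m) = (m - 2^c)(2^{c+1} - m)$ for $m \in [2^c, 2^{c+1})$, so $\zeta_K = (n - 2^c)(2^{c+1} - n)$ is constant in $K$. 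For a general non-well-behaved CIS-Nim I would attempt to mimic this by showing $\zeta_K$ is eventually constant, leveraging the plateau identities $S_x = S_{x+1}$ valid whenever $x \in X$: these follow from Lemma~\ref{x-y} once one observes that $b(x,y) \leq b(x,x) = 0$ makes every bijection $U_{x,y} \to U_{x,y+1}$ the identity, combined with the identity $U_{x,x} = U_{x+1,0}$ established at the start of the proof of Lemma~\ref{SnSm}.

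The main obstacle will be making the preimage bookkeeping precise when the dyadic scale $n 2^K$ does not itself lie in $X$---a situation that can persist indefinitely in examples more general than Nim. A cleaner alternative is to repurpose the potential function $g$ from Lemma~\ref{gprop}: because the anchor $h(x,x) = 0$ caps the Nim-like contribution from the rows indexed by $X$, one can define a modified potential that subtracts off this anchor and remains finite without invoking the well-behavedness hypothesis, then run the argument of Lemma~\ref{limith} against this modified potential to conclude convergence of $\zeta_K$ by the same contradiction-via-infinite-descent scheme.
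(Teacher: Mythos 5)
Your Step 1 is correct and is consistent with how the paper begins: failure of well-behavedness is equivalent (via Lemma \ref{B+2L}) to $b(x,x)=0$ for infinitely many large $x$, and your deduction that $h(x,x)=0$ for such $x$ (hence $h(x,x2^j)=0$ by Lemma \ref{hprop}) is sound. The genuine gap is that Step 2 never actually establishes convergence. The paper's proof does not stop at the anchor rows: it uses the identity $r(m',m')+2b(m',m')+1=m'$ together with the upper bounds $r(m',m')\leq m2^k-m'-1$ and $b(m',m')\leq m'-m2^{k-1}$ (both forced by the emptiness of the anchor rows/columns) to conclude that \emph{both inequalities are tight for every} $m'$ between consecutive anchors, and hence that $S$ consists exactly of the full dyadic blocks $m2^{k-1}\leq x,y<m2^k$. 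That is the step that shows $S$ has no holes at all, which makes \emph{every} bijection of Lemmas \ref{x-y}--\ref{SnSm} the identity, so that $S_{n2^k}$ is eventually constant in $k$ for \emph{every} $n$ --- including the scales $n2^K\notin X$ that you correctly identify as the obstacle. Your plateau identity $S_x=S_{x+1}$ for $x\in X$ gives only one identity step per anchor and cannot by itself control $h(n2^K,n2^K)$ off the anchor set.

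Neither of your two fallback strategies closes this gap as stated. The ``mimic Nim'' route presupposes the very block structure you have not proved (you have shown the blocks are the only place $S$ can live, not that they are full, and $h$ counts points of $S_{n2^K}$, which the bijection $\phi_{x,n2^K}$ does not map into $R_{n2^K}$ in any controlled way without the no-holes property). The ``modified potential'' route is worse off: $g(c_1,1)$ diverges for non-well-behaved games because every dyadic block contributes points of $S$ on or near the line $x=2y$ with $f((x,y),1)$ growing like $2^k$, so the divergence is not concentrated in the anchor rows and cannot be removed by subtracting their contribution. You would need to carry out the paper's tightness/filling argument (or an equivalent structural determination of $S$) before either strategy can be completed.
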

\begin{proof}
The methods here will be very different. Games which are not well behaved are much easier to analyze, so we will be able to describe the $S_n$ in great detail, and the fact that $\displaystyle\lim_{k\rightarrow\infty}\frac{h({n2^k},n2^k)}{4^k}$ converges will follow directly.

We know that for infinitely many values of $m>4F_{max}+3|F|+1$, $r(m,m)\geq m-1$. It is not possible to have $r(m,m)>m-1$, since $r(m,m)+2b(m,m)+1=m$. Therefore, for infinitely many values of $m>4F_{max}+3|F|+1$, $r(m,m)= m-1$. For any such $m$, we know that $r(m,m)+2b(m,m)+1=m$, so $b(m,m)=0$. This means that for all $y< m$, $(m,y)\notin S$ and $b(m,y)=0$. Therefore, $r(m,y)=0$, which means that for any $y< m\leq x$, $(x,y)\notin S$. In particular, this means that $r(m-1,m-1)=r(m,m-1)=0$, so $2b(m-1,m-1)+0+1=m-1$, so $b(m-1,m-1)=\frac{m-2}{2}$.

Notice that if there were some $m^\prime\leq \frac{m-1}{2}$, with $(m-1,m^\prime)\in S$, then there would be a $P$-position of the form $\{m-1,m^\prime,m^{\prime\prime}\}$ with $m^{\prime\prime}<m^\prime$. In this case, $m-1>4F_{max}+3|F|\geq2F_{max}+|F|$ and $$m^\prime+m^{\prime\prime}< 2m^\prime\leq 2(\frac{m-1}{2})=m-1,$$ contradicting Theorem \ref{rowcol2}. Therefore, for all $(m-1,m^\prime)\in S$, $m^\prime> \frac{m-1}{2}$. This means that the $\frac{m-2}{2}$ values of $m^\prime$ with $(m-1,m^\prime)\in S$ are exactly the integers from $\frac{m}{2}$ to $m-2$ inclusive. Therefore, $(m-1,\frac{m}{2})\in S$.

Now, we want to show that $(m^\prime,\frac{m}{2})\in S$ for all $\frac{m}{2}<m^\prime\leq m-1$. Assume for the purpose of contradiction that there exists some $\frac{m}{2}<m^\prime<m-1$, such that $(m^\prime,\frac{m}{2})\notin S$, and consider the greatest such $m^\prime$. $r(m^\prime,\frac{m}{2})=m-1-m^\prime$. Therefore, by Lemma \ref{B<L}, $b(m^\prime,\frac{m}{2})\geq m-1-m^\prime$, so there exists some $m^{\prime\prime}\leq m^\prime-\frac{m}{2}$ with $(m^\prime,m^{\prime\prime})\in S$. There would therefore exist a $P$-position of the form $\{m^\prime,m^{\prime\prime},m^{\prime\prime\prime}\}$ with $m^{\prime\prime}>m^{\prime\prime\prime}$. Notice that $m^\prime>\frac{m}{2}>2F_{max}+|F|$ and $$m^{\prime\prime}+m^{\prime\prime\prime}< 2m^{\prime\prime}\leq2(m^\prime-\frac{m}{2})=2m^\prime-m< m^\prime+(m^\prime-1)-m<m^\prime,$$  contradicting Theorem \ref{rowcol2}. Therefore, $(m^\prime,\frac{m}{2})\in S$ for all $\frac{m}{2}<m^\prime\leq m-1$, so $r(\frac{m}{2},\frac{m}{2})\geq m-1-\frac{m}{2}$, so $r(\frac{m}{2},\frac{m}{2})=\frac{m}{2}-1.$

For any point $m$ satisfying $r(m,m)=m$, we know the greatest value $m^\prime<m$ satisfying $r(m^\prime,m^\prime)=m^\prime-1$ is $\frac{m}{2}$. This also tells us that the least value $m^\prime<m$ satisfying $r(m^\prime,m^\prime)=m^\prime-1$ is $2m$. Therefore, if we let $m$ be the least value greater than $4F_{max}+3|F|+1$ with $r(m,m)=m-1$, then for any $m^\prime>4F_{max}+3|F|+1$, $r(m^\prime,m^\prime)=m^\prime-1$ if and only if $m^\prime = m2^k$ for some nonnegative integer $k$.

We also know that for any $y< m2^k\leq x$, $(x,y)\notin S$, which implies that $r(y,y)\leq m2^k-y-1$, and $b(x,x)\leq x-m2^k$. Therefore, given any $m2^{k-1}\leq m^\prime<m2^k$, $r(m^\prime,m^\prime)\leq m2^k-m^\prime-1$ and $b(m^\prime,m^\prime)\leq m^\prime-m2^{k-1}$. Notice that $r(m^\prime,m^\prime)+2b(m^\prime,m^\prime)+1=m^\prime$ is satisfied if and only if both of these inequalities are tight, so given any $m2^{k-1}\leq m^\prime<m2^k$, $r(m^\prime,m^\prime)=m2^k-m^\prime-1$ and $b(m^\prime,m^\prime)=m^\prime-m2^{k-1}$.

This means that all of the points $(x,y)$ which we have not already determined to not be in $S$ must be in $S$. Therefore, for any $x>4F_{max}+3|F|+1, (x,y)\in S$ if and only if there exists a $k$, such that $m2^{k-1}\leq x,y<m2^k$. This in particular means that there is no $(x,y)$ with $x>4F_{max}+3|F|+1$, such that $(x,y)\notin S$ but $b(x,y)>0$. This means that all of the functions defined in Lemmas \ref{x-y}, \ref{Uhatxy,xy+1}, and \ref{SnSm} are the identity, which in particular means that if we set $k^\prime$ to be the least nonnegative integer such that $n2^{k^\prime}>4F_{max}+3|F|+1$, then $S_{n2^k}=S_{n2^{k^\prime}}$ for all $k\geq k^\prime$. Therefore, $$\lim_{k\rightarrow\infty}\frac{
h({n2^k},n2^k)}{4^k}=\lim_{k\rightarrow\infty}\frac{
h({n2^{k^\prime}},n2^k)}{4^k}=\lim_{k\rightarrow\infty}\frac{
h({n2^{k^\prime}},n2^{k^\prime})}{4^{k^\prime}}=\frac{h({n2^{k^\prime}},n2^{k^\prime})}{4^{k^\prime}}.$$
Therefore, $\displaystyle\lim_{k\rightarrow\infty}\frac{
h({n2^k},n2^k)}{4^k}$ converges.
\end{proof}
\subsection{Proof of the Period-Two Scale Invariance}
We now have all the lemmas necessary to complete the proof of the period-two scale invariance. 
\begin{thm2}[Period-Two Scale Invariance]\label{final2}
Given any instance $\mathit{Nim}-F$ of CIS-Nim, let $\pi(n)$ denote the number of $P$-positions in $\mathit{Nim}-F$ of the form $\{x,y,z\}$, with $x$, $y$, and $z$ all less than $n$. For any positive integer $n$, $\displaystyle\lim_{k\rightarrow\infty}\frac{\pi(n2^k)}{(n2^k)^2}$ converges to a nonzero constant.
\end{thm2}
\begin{proof}
All that needs to be done to complete the proof is to convert the result from Lemmas \ref{limith} and \ref{degenerate} from a statement about $h(m,m)$ to an analogous statement about $\pi(n)$.

Notice that $h({m},m)$, is the number of $(x,y)\in S_{m}$ with $y<m\leq x\leq 2y$. From the definition of $S_{m}$, this is the number of ordered pairs $(x,y)$ with $y<m\leq x\leq 2y$ and $r(m,y)>x-m$.

Notice that if there existed a point $(x,y)$ with $y<m\leq x$ and $x\geq 2y$ such that $r(m,y)>x-m$, then there must be greater than $2y+1-m$ values of $x^\prime\geq m$ with $(x^\prime,y)\in S$, then there must be at least 1 value of $x^\prime\geq 2y$ with $(x^\prime,y)\in S$. However, this would mean that there would be a $P$-position of the form $\{x,y,z\}$ with $x\geq 2y$ and $x>y>z$. However, this means that $x\geq 2y>y+z$, which, since $x\geq m>4F_{max}+3|F|$, contradicts Theorem \ref{rowcol2}. This means that the $x\leq 2y$ condition is unnecessary, so $h(m,m)$ is the number of ordered pairs $(x,y)$ with $y<m\leq x$ and $r(m,y)>x-m$.

Notice that for each $y$, there are exactly $r(m,y)$ values of $x$ with $m\leq x$ and $r(m,y)>x-m$. Therefore, $\displaystyle h(m,m)=\sum_{y=0}^{m-1} r(m,y),$ which is exactly the number of ordered pairs $(x,y)\in S$ with $y<m\leq x$, or equivalently the number of $P$-positions of the form $\{x,y,z\}$ with $x\geq m>y>z$.

For each ordered pair $(y,z)$ with $z<y<m$, let $x$ be the unique value such that $\{x,y,z\}$ is a $P$-position. There are exactly $\frac{m^2-m}{2}$ such ordered pairs, and exactly $h(m,m)$ of them satisfy the relation $x\geq m$. Therefore, the remaining $\frac{m^2-m}{2}-h(m,m)$ of them satisfy the relation $x<m$. 
Let $\pi_3(m)$ be the number of $P$-positions of the form $\{x,y,z\}$ with $x$, $y$, and $z$ distinct and less than $m$. Let Let $\pi_2(m)$ be the number of $P$-positions of the form $\{x,x,y\}$ with $x$ and $y$ distinct and less than $m$. Let $\pi_1(m)$ be the number of $P$-positions of the form $\{x,x,x\}$ with $x<m$. Clearly, $\pi_3(m)+\pi_2(m)+\pi_1(m)=\pi(m)$.

Notice that given a $P$-position of the form $\{x,y,z\}$ with $x>y>z>m$, by definition, $(x,y)$, $(x,z)$, and $(y,z)$ are in $T_m$. Give an $P$-position of the form $\{x,x,y\}$ with $x$ and $y$ distinct and less than $m$, clearly  exactly one of $(x,y)$ and $(y,x)$ is in $m$. Also, all ordered pairs $(y,z)$ with $z<y<m$ and $x<m$ fall into one of these two cases. Therefore, $$\frac{m^2-m}{2}-h(m,m)=3\pi_3(m)+\pi_2(m).$$
Therefore, $$6\pi(m)=m^2-m-2h(m,m)+4\pi_2(m)+6\pi_1(m).$$
Therefore, 
$$\frac{\pi(n2^k)}{(n2^k)^2}=\frac{n^24^k-n2^k-2h({n2^k},n2^k)+4\pi_2(n2^k)+6\pi_1(n2^k)}{6(4^k)n^2},$$
so 
$$\frac{\pi(n2^k)}{(n2^k)^2}=\frac{1}{6}-\frac{1}{3n^2}\frac{h(n2^k,n2^k)}{4^k}-\frac{1}{6(2^k)n}+\frac{4\pi_2(n2^k)}{6(4^k)n^2}+\frac{\pi_1(n2^k)}{4^kn^2}.$$
Notice that $\frac{1}{6}$ is a constant, $-\frac{1}{3n^2}\frac{h({n2^k},n2^k)}{4^k}$ converges as $k$ goes to infinity by Lemmas \ref{limith} and \ref{degenerate}, and $-\frac{1}{6(2^k)n}$ converges to 0 as $k$ goes to infinity. For any $x$, there is only one $P$-position of the form $\{x,x,y\}$, and at most one $P$-position of the form $\{x,x,x\}$. Therefore, $\pi_2(n2^k)$ and $\pi_1(n2^k)$ are both less than or equal to $n2^k$, so $\frac{4\pi_2(n2^k)}{6(4^k)n^2}$ and $\frac{\pi_1(n2^k)}{4^kn^2}$ both converge to 0 as $k$ goes to infinity. Therefore, $\frac{\pi(n2^k)}{(n2^k)^2}$ converges as $k$ goes to infinity.

Further, we know that $\frac{\pi(n2^k)}{(n2^k)^2}$ does not converge to 0, since $\pi(n2^k)\geq \pi_3(n2^k)$ which is equal to the number of $P$-positions of the form $\{x,y,z\}$ with $x,y,z<m$, which is one sixth the number of ordered triples $(x,y,z)$ with $x,y,z<m$ such that $\{x,y,z\}$ is a $P$-position. For each pair $y,z<\frac{n2^{k}-|F|}{2}$, there exists an ordered triple $(x,y,z)$ with $x\leq y+z+|F|<n2^k$ and $y,z<n2^k$, such that $\{x,y,z\}$ is a $P$-position. There are $\frac{(n2^{k}-|F|)^2}{4}$ such pairs, so $$\pi(n2^k)\geq\pi_3(n2^k)\geq\frac{(n2^{k}-|F|)^2}{24}.$$
Therefore, 
$\frac{\pi(n2^k)}{(n2^k)^2}\geq\frac{(n2^{k}-|F|)^2}{24(n2^k)^2}$, which converges to $\frac{1}{24}$ as $k$ goes to infinity.

Therefore, for any positive integer $n$, $\displaystyle\lim_{k\rightarrow\infty}\frac{\pi(n2^k)}{(n2^k)^2}$ converges to a nonzero constant.
\end{proof}

\section{Concluding remarks}
We have thus shown here that the class of combinatorial games CIS-Nim obeys a form of scale invariance (period-two scale invariance).  The existence of such scaling properties in combinatorial games had been previously hinted at using renormalization techniques adapted from physics.  However, such techniques were nonrigorous in nature; the present work is the first  formal characterization of scaling in this context.  Additionally, it has been demonstrated that certain properties of combinatorial games persist under perturbations (the perturbations here being defined by the forbidden set  $F$), and hence are `generic' in the sense of dynamical systems theory.

That said, the version of the period-two scale invariance proven in this paper was not the strongest version possible. A much stronger version, which is also appears to be true, would allow more general regions than the set of points $\{x,y,z\}$ with $\{x,y,z\}<n.$ We therefore conjecture a stronger version of the period-two scale invariance: 
\begin{con}\label{genregion}
Given any instance $\mathit{Nim}-F$ of CIS-Nim and any open set $S\subseteq\mathbb{R}^3,$ let $\pi(R,k)$ be the number of $P$-positions of the form $\{x2^k,y2^k,z2^k\},$ with $x,y,z\in \mathbb{Q}$ and $(x,y,z)\in S$. Then $\displaystyle\lim_{k\rightarrow\infty}\frac{\pi(R,k)}{4^k}$ converges.
\end{con}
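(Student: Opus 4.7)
My plan is to promote Theorem \ref{final2} from pointwise convergence on symmetric cubes to weak convergence of a sequence of empirical measures, and then apply the Portmanteau theorem. For each $k$, define a measure $\mu_k$ on $\mathbb{R}_{\geq 0}^3$ by $\mu_k = 4^{-k} \sum \delta_{(X/2^k,\, Y/2^k,\, Z/2^k)}$, summed over ordered triples $(X,Y,Z) \in \mathbb{Z}_{\geq 0}^3$ with $\{X,Y,Z\}$ a $P$-position of $\mathit{Nim}-F$ (each unordered $P$-position thus contributing between one and six atoms, matching the ordering convention implicit in the conjecture). Then $\pi(S,k)/4^k$ and $\mu_k(S)$ agree up to a fixed constant, so the conjecture reduces to showing $\mu_k \Rightarrow \mu$ weakly for some Borel measure $\mu$ with $\mu(\partial S) = 0$ for generic open $S$.

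The first major step is to extend Theorem \ref{final2} from cubes $[0,n]^3$ to axis-aligned boxes $[0,a_1] \times [0,a_2] \times [0,a_3]$ with dyadic rational $a_i$; by inclusion-exclusion this then delivers convergence on every dyadic box, and these form a $\pi$-system generating the Borel $\sigma$-algebra. The natural route is to re-run the machinery of Section 6 in an anisotropic setting: introduce approximating sets $S_n^{(a_1,a_2,a_3)}$, counting functions $h$ tracking $P$-positions inside anisotropic analogues of $R_n$, and a modified potential $g$. Lemma \ref{B+2L} is essentially a row-counting identity and should survive unchanged, but the potential from Lemma \ref{limith} — calibrated to the bounds $x_1 - y_1 \geq x_2 - y_2$ and $x_1 - 2y_1 \geq x_2 - 2y_2$ coming from the symmetric self-similarity in Lemma \ref{Uhatxy,xy+1} — must be recast with coordinate-dependent weights, giving three simultaneous inequalities and a three-component potential that must all stay nonnegative while the relevant counts oscillate. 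Once convergence on dyadic boxes is established, tightness of $\mu_k$ on bounded regions follows from the $O(N^2)$ bound on $\mu_k([0,N]^3)$ already furnished by Theorem \ref{final2}, and the Portmanteau theorem yields weak convergence $\mu_k \Rightarrow \mu$. For open $S$ the pointwise identity $\mu_k(S) \to \mu(S)$ requires $\mu(\partial S) = 0$, which I would prove by showing that $\mu$ is supported on (and absolutely continuous with respect to the two-dimensional Hausdorff measure of) the graph $\{(x,y,z(x,y)) : x,y \geq 0\}$, where $z(x,y)$ is the unique value guaranteed by Theorem \ref{rowcol}, so $\mu$ vanishes on any set whose boundary has two-dimensional measure zero.

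The principal obstacle is the asymmetric generalization of Section 6. The paper's argument exploits the permutation symmetry of Nim in essential ways: Lemma \ref{B+2L} is a diagonal count, the self-similarity of Definition \ref{Uhat} treats $x$ and $y$ interchangeably via $(x^\prime, y^\prime) \mapsto (y^\prime, \lfloor x^\prime/2 \rfloor)$, and the precise form of the potential in Lemma \ref{limith} is dictated by that symmetry. Breaking it will likely force either a genuinely new combinatorial identity controlling the anisotropic counts, or — more appealingly — an intrinsic renormalization-fixed-point argument in which $\mu$ is identified directly as the unique bounded positive measure on $\mathbb{R}_{\geq 0}^3$ invariant under the doubling self-map implicit in Property 2 of Definition \ref{Uhat}, bypassing the explicit potential entirely. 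Either route would also plausibly strengthen the conjecture by giving an explicit formula for the limiting density in terms of the forbidden set $F$.
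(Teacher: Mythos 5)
This statement is Conjecture \ref{genregion}; the paper explicitly leaves it unproven, so there is no proof of record to compare against and your proposal must stand on its own. It does not: it is a research program, not a proof, and the program's load-bearing step is precisely the one you leave open. Everything in your plan funnels through establishing convergence of $\pi(\cdot,k)/4^k$ on anisotropic dyadic boxes $[0,a_1]\times[0,a_2]\times[0,a_3]$, and you correctly observe that the paper's potential-function argument (Lemma \ref{limith}) is calibrated to the two inequalities $x_1-y_1\geq x_2-y_2$ and $x_1-2y_1\geq x_2-2y_2$ produced by the symmetric doubling map of Lemma \ref{Uhatxy,xy+1}. But you then only gesture at ``coordinate-dependent weights'' and ``a three-component potential,'' or alternatively at an ``intrinsic renormalization-fixed-point argument,'' without constructing either or giving any reason the modified potential remains nonnegative while controlling the oscillation of the anisotropic counts. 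The paper's bijections $\phi_{n,m}$ move points along very specific families of arithmetic progressions tied to the symmetric scaling $(x,y)\mapsto(\lfloor x/2\rfloor,\lfloor y/2\rfloor)$; there is no indication that displacements in an anisotropic box are monotone with respect to any linear functional other than the two the paper already uses, and that monotonicity is the entire content of the potential argument. Until that is supplied, the $\pi$-system, tightness, and Portmanteau scaffolding has nothing to rest on.

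There is a second, independent gap at the end of the plan. Even granting weak convergence $\mu_k\Rightarrow\mu$, Portmanteau only yields $\mu_k(S)\to\mu(S)$ when $\mu(\partial S)=0$, whereas the conjecture asserts convergence for \emph{any} open $S$. Your own hedge (``for generic open $S$'') concedes the mismatch: if $\mu$ is carried by the graph $\{(x,y,z(x,y))\}$ and $K$ is a compact, nowhere dense subset of that graph with $\mu(K)>0$, then $S=\mathbb{R}^3\setminus K$ is open with $\partial S\supseteq K$, and the argument gives only $\liminf_k\mu_k(S)\geq\mu(S)$, not convergence. Your proposed repair --- that $\mu$ is absolutely continuous with respect to two-dimensional Hausdorff measure on the graph --- is itself an unproven structural claim that the paper's machinery does not support; the self-similar, lattice-of-triangles structure visible in Figure 1a is entirely consistent with the limit measure having singular components concentrated on self-affine subsets of the graph, in which case absolute continuity fails and the boundary-measure-zero condition cannot be verified by a Hausdorff-dimension count. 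So even the final step of the plan would need either a new regularity theorem for $\mu$ or a restriction of the conjecture to sets with $\mu$-null boundary.
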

Alternatively, we could make this statement stronger by considering more general versions of the game of Nim. This can be done by considering the piles in Nim to be labeled, so the positions are ordered triples. This would allow for non-symmetric forbidden sets. We could also consider Nim played with a arbitrary number of piles. We conjecture that that this generalization will also preserve the period-two scale invariance.
\begin{con}\label{genheaps}
Let $m$ be any positive integer, let $F$ be a set of positions in $m$-Heap Nim with labeled piles. Given any open set $S\subseteq\mathbb{R}^m,$ let $\pi(R,k)$ be the number of $P$-positions of $m$-Heap $\mathit{Nim}-F$ of the form $2^kv$ with $v\in S$. Then $\displaystyle\lim_{k\rightarrow\infty}\frac{\pi(R,k)}{2^{(n-1)k}}$ converges.
\end{con}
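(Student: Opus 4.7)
The plan is to bootstrap from Theorem \ref{final2} in three successive generalizations: unlabeled to labeled piles, cube regions to arbitrary open regions, and three heaps to $m$ heaps. The first two are essentially matters of extending the existing machinery; the third requires substantively new work.

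Passing from unlabeled to labeled three-heap Nim is primarily bookkeeping. Given $F$ for labeled Nim, symmetrize to $F' = \bigcup_{\sigma} \sigma F$; the labeled P-position count differs from six times the unlabeled count (for $\mathrm{Nim}-F'$) by an $O(2^k)$ diagonal correction that is negligible after dividing by $4^k$. To get arbitrary open regions rather than just cubes, first establish a rectangular-box version of Theorem \ref{final2} by redoing the potential-function argument of Lemma \ref{limith} on a three-parameter count $h(m; n_1, n_2, n_3)$; then obtain arbitrary dyadic boxes $\prod_i [a_i 2^{-j} n, b_i 2^{-j} n)$ by inclusion-exclusion on the corner boxes, and finally squeeze a general open region between inner and outer finite unions of dyadic boxes whose symmetric difference has arbitrarily small Lebesgue measure.

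For general $m$, the entire machinery of Sections 5-6 must be lifted to higher dimensions. The unique $m$-th coordinate extending any $(m-1)$-tuple to a P-position follows directly from Bouton. Define $S^{(m)}$ as the set of ordered $(m-1)$-tuples $(x_1 > \cdots > x_{m-1})$ such that some $x_m < x_{m-1}$ makes $\{x_1, \ldots, x_m\}$ a P-position; define $r$ and $b$ as the corresponding "ray" and "downward-box" counts. The folding $(x_1, \ldots, x_{m-1}) \mapsto (x_2, \ldots, x_{m-1}, \lfloor x_1/2 \rfloor)$ again encodes the period-two XOR scaling and yields an $m$-dimensional analogue of $\bar U$. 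The identity $r(x,\ldots,x) + 2b(x,\ldots,x) + 1 = x$ generalizes to a higher-dimensional counting identity via the uniqueness of the P-extension, and a potential function analogous to $g$—built from a signed linear combination of coordinates that strictly decreases along the canonical bijection induced by folding—controls the evolution of $S_n$.

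The main obstacle is adapting Lemma \ref{x-y} to $m \geq 4$. Its eleven-region case analysis partitions $\mathbb{Z}^2$ according to the relation between a test point and a pivot, and a single-point modification propagates under folding to infinitely many coherently moving points. In $\mathbb{Z}^{m-1}$ the number of analogous regions grows factorially in $m$, and each "single-point modification" becomes a modification of a codimension-one slice of $U$. Direct case analysis then becomes unmanageable, so a more conceptual reformulation is needed—most plausibly an induction on $m$ that reduces $m$-heap $\mathrm{Nim}-F$ to a family of $(m-1)$-heap CIS-Nim games parameterized by the first pile size, invoking the inductive density result together with a uniform-tightness or compactness argument to stitch the conditional densities back into a joint $m$-heap density. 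This conjectural reduction is, in my view, both the correct shape of a proof and its principal technical hurdle.
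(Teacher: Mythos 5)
This statement is Conjecture \ref{genheaps}: the paper offers no proof of it, so there is nothing to compare your attempt against --- and, by your own admission, what you have written is a research program rather than a proof. You explicitly flag the central step (lifting Lemma \ref{x-y} and the hole-correcting machinery to $m\geq 4$) as a ``conjectural reduction'' and a ``principal technical hurdle,'' which means the argument is incomplete exactly where the difficulty lives. The paper's entire proof of Theorem \ref{final2} hinges on the delicate interaction between Lemma \ref{B+2L} (the identity $r(x,x)+2b(x,x)+1=x$), Lemma \ref{B<L} (the bound on holes), and the explicit eleven-region bijection; none of these is known to survive the passage to higher dimensions, and your proposed induction on $m$ via conditional $(m-1)$-heap densities is not obviously meaningful, since fixing one pile of $m$-heap $\mathrm{Nim}-F$ does not yield an $(m-1)$-heap CIS-Nim game (the forbidden set seen by the remaining piles is not finite and the move structure on the fixed pile is lost).

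Two of the steps you treat as routine also contain genuine gaps. First, for labeled piles with an asymmetric $F$, the $P$-positions of $\mathrm{Nim}-F$ need not be permutation-invariant, so symmetrizing $F$ to $F'=\bigcup_\sigma \sigma F$ changes the game: $\mathrm{Nim}-F$ and $\mathrm{Nim}-F'$ can have different $P$-sets, and the discrepancy is not confined to a diagonal of size $O(2^k)$. Second, existence and uniqueness of the completing $m$-th coordinate does not ``follow directly from Bouton'' --- Bouton's theorem applies to unperturbed Nim only, and for CIS-Nim this fact is exactly Theorem \ref{rowcol}, whose counting proof would itself need to be redone for $m$ heaps. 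Finally, your passage from dyadic boxes to arbitrary open sets presupposes a box-version of Lemma \ref{limith}, but the potential-function argument there is tied to the specific region $y<n\leq x\leq 2y$ and to the algebraic identity relating $h(m,m)$ to $\pi(m)$; extending it to general boxes is essentially Conjecture \ref{genregion}, which the paper also leaves open.
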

Finally, this general notion of a Cofinite Induced Subgraph Games can be applied to any other impartial combinatorial games. Since other games do not necessarily satisfy a period-two scale invariance, this result will not generalize to the CIS version of most of these other games. However, by analyzing the cofinite induced subgraphs of a game graph, we learn which properties of the structure of the $P$-positions are unstable and dependent on a finite set of end game positions, and which properties stable and inevitable regardless of the details of the end game. 
\nocite{*}

Acknowledgments:  ASL's
research has been supported in part through a W.M. Keck
Foundation research grant. EJF's research has been supported in part by the NSF under grant CDI-0835706.

\end{document}